\newcommand{\Subsection}[1]{\subsection{ #1} ${}^{}$}
\newtheorem{theorem}{Theorem}[section]
\newtheorem{lemma}[theorem]{Lemma}
\newtheorem{proposition}[theorem]{Proposition}
\newtheorem{definition}[theorem]{Definition}
\newtheorem{remark}[theorem]{Remark}
\newtheorem{corollary}[theorem]{Corollary}
\newcounter{hypo}
\newenvironment{hyp}{
 \begin{enumerate}
\setcounter{enumi}{\value{hypo}} \item}{\stepcounter{hypo} \end{enumerate}}
\title[SSF for perturbed periodic Schr\"odinger operators]
{Spectral shift function for slowly varying perturbation of periodic
Schr\"odinger operators.}
\author[M. Dimassi]{Mouez Dimassi}
\address{Mouez Dimassi, LAGA, (UMR CNRS 7539), Univ. Paris 13, F-93430 Villetaneuse, France}
\email{dimassi@math.univ-paris13.fr}
\author[M. Zerzeri]{Maher Zerzeri}
\address{Maher Zerzeri, LAGA, (UMR CNRS 7539), Univ. Paris 13, F-93430 Villetaneuse, France}
\email{zerzeri@math.univ-paris13.fr}
\keywords{Periodic Schr\"odinger operator, spectral shift function, asymptotic expansions,\\
limiting absorption theorem}
\subjclass[2000]{81Q10 (35P20 47A55 47N50 81Q15)}
\begin{document}

\begin{abstract} In this paper we study the asymptotic expansion
of the spectral shift function for the slowly
varying perturbations of periodic Schr\"odinger operators.
We give a weak and pointwise asymptotic expansions
in powers of $h$ of the derivative of the spectral shift function
corresponding to the pair $\big(P(h)=P_0+\varphi(hx),P_0=-\Delta+V(x)\big),$
where $\varphi(x)\in {\mathcal C}^\infty(\mathbb R^n,\mathbb R)$
is a decreasing function, ${\mathcal O}(|x|^{-\delta})$ for some $\delta>n$
and $h$ is a small positive parameter.
Here the potential $V$ is real, smooth and periodic with respect to
a lattice $\Gamma$ in ${\mathbb R}^n$.
To prove the pointwise asymptotic expansion of the spectral shift function, we establish
a limiting absorption Theorem for $P(h)$.
\end{abstract}

\maketitle

\setcounter{tocdepth}{2}
\tableofcontents

\vfill\break
\section{Introduction}\label{intro}

The aim of this paper is to give an asymptotic expansion of the
spectral shift function for the slowly varying perturbations of
periodic Schr\"odinger operator:
\begin{equation}\label{one}
P(h)=P_0+\varphi(hx),\quad h>0,
\end{equation}
$$
P_0=-\Delta_x+V(x),
$$
Here $V$ is a real-valued, ${\mathcal C}^\infty$ function and
periodic with respect to a lattice $\Gamma$ of $\mathbb
R^n$.

The Hamiltonian $P(h)$ describes the quantum motion of an
electron in a crystal placed in an external field. There are many
works devoted to the spectral properties of this model, see
\cite{AlDeHe89_01, BiYa95_01, Bu87_01, Di98_01,  Di02_01, Di05_01, DiZe03_01, Ge90_01,
GeMaSj91_01, GeNi98_01, GuRaTr88_01, HoSpTe01_01, Sl49_01}.

We assume  that $\varphi\in {\mathcal C}^\infty({\mathbb R}^n; {\mathbb
R})$ and satisfies the following estimate:
for all $\alpha\in \mathbb{N}^n,$ there exists $C_\alpha>0$ such that
\begin{equation}\label{as1}
\vert \partial^\alpha_x\varphi(x)\vert\leq C_\alpha(1+\vert x\vert)^{-\delta-|\alpha|},
\quad \forall x\in \mathbb R^n,\quad
\text{with} \  \boxed{{\delta>n}}.
\end{equation}

The operators $P_0, P(h)$ are self-adjoint on $H^2({\mathbb R}^n)$.
Under the assumption (\ref{as1}) we show in Theorem \ref{w-SSFh} below
that the operator $\big[f(P(h))-f(P_0)\big]$ belongs to the trace class for all
$f\in {\mathcal C}^\infty_0(\mathbb R)$. Following the general setup we
define the spectral shift function, SSF,
$\xi_h(\mu):=\xi(\mu;P(h),P_0)$ related to the pair $(P(h),P_0)$ by
\begin{equation}
{\rm tr}\big[f(P(h))-f(P_0)\big]=-\langle \xi_h'(\cdot),
f(\cdot)\rangle=\int_{\mathbb R} \xi_h(\mu) f'(\mu)
d\mu,\quad \forall f\in {\mathcal C}^\infty_0(\mathbb R).
\end{equation}
By this formula $\xi_h$ is defined modulo a constant but for the
analysis of the derivative $\xi'_h(\mu)$ this is not
important. See \cite{Kr53_01} and \cite{BiYa93_01}.

The SSF may be considered as a generalization of the eigenvalues
counting function. The notion of SSF was first singled out by the
outstanding theoretical physicist I-M.Lifshits in his investigations
in the solid state theory, in 1952, see \cite{Li52_01}. It was
brought into mathematical use in M-G. Kre\u{\i}n's famous paper
\cite{Kr53_01}, where the precise statement of the problem was given
and explicit representation of the SSF in term of the perturbation
determinant was obtained. The work of M-G. Kre\u{\i}n's on the SSF
has been described in detail in \cite{BiYa93_01}. For more details
about the interpretation of SSF we refer to the survey by D. Robert
\cite{Ro99_01} and to the monograph by D-R. Yafaev
\cite[Chapter 8]{Ya00_01}.

In the case where $V=0$, the asymptotic behavior of the SSF of the
Schr\"odinger operator has been intensively studied in different
aspects (see \cite{Co81_01,Gu85_01,MaRa78_01,PePo82_01,Ro92_01,Ro94_01,RoTa87_01}
and the references given there). In the semi-classical regime (i.e. $H(h)=-h^2\Delta_x+\varphi(x),
(h\searrow 0))$ the Weyl type asymptotics of
$\xi_h(\cdot)$ with sharp remainder
estimate has been obtained (see \cite{Ro92_01,Ro94_01,RoTa87_01,RoTa88_01}).
On the other hand, if an energy
$\mu>0$ is non-trapping for the classical hamiltonian
$p(x,\zeta)=\vert \zeta\vert^2+\varphi(x)$ (i.e. for all $(x,\zeta)\in
p^{-1}\{\mu\}$, $\vert {\rm exp}(tH_p)(x,\zeta)\vert \rightarrow
\infty$ when $t\rightarrow \infty$) a complete asymptotic
expansion in powers of $h$  of $\xi'_h(\mu)$ has been
obtained (see \cite{Ro92_01,Ro94_01,RoTa87_01,RoTa88_01}).
Similar results are well-known for the SSF at
high energy (see \cite{Bu71_01,Co81_01,PePo82_01,Po82_01,Ro91_01}).

There are only few works treating the SSF in perturbed periodic
Schr\"odinger operator. See \cite{BiYa95_01}, \cite{Di05_01} and
also \cite{GeNi98_01}. In \cite{Di05_01} the connection between
the resonances of $P(h)$ and the SSF associated to the pair
$\big(P(h), P_0\big)$ were studied.
Under the assumption that $\varphi$ is analytic in
some conic complex neighborhood of the real axis and that $P(h)$ has
no resonances in a small complex neighborhood of some interval $I$
the first author obtained a full asymptotic expansion in powers of $h$ of
the derivative of SSF:
\begin{equation}\label{Dimasym}
\xi'(\mu;h)\sim \sum_{j=0}^{\infty} b_j(\mu) h^{j-n},\quad
h\searrow 0,
\end{equation}
uniformly with respect to $\mu\in I$.

Nevertheless, there is a lot of examples of perturbed periodic Schr\"odinger operator
that the perturbation $\varphi$ does not satisfies the analyticity assumption.

In this paper, we improve the result of \cite{Di05_01} concerning
the behavior of the derivative of SSF by removing the analyticity
assumption on the potential $\varphi$. Our proof is based on a
limiting absorption principle and some arguments due to D. Robert
and H. Tamura, see \cite{RoTa84_01} and \cite{RoTa88_01}.
For $V\not=0,$ the limiting absorption theorem is new
(see Theorem \ref{abslimit1}).
They are in harmony with the physical
intuition which argues that, when $h$ sufficiently small, the main
effect of the periodic potential $V$ consists in changing the
dispersion relation from the free kinetic $|k|^2$ to the modified
kinetic energy $\lambda_p(k)$ given by the $p$th band.

By the method of effective hamiltonian spectral problems of $P(h)$
can be reduced to similar problem of systems of
$h-$pseudodifferential operators (See \cite{Di02_01} and also
\cite{DiZe03_01}). Using a well-known results on
$h-$pseudodifferential calculus we get the asymptotic
\eqref{Dimasym} in the sense of distributions. If the values of the
principal term of the effective hamiltonian are contained in
non-trapping energy region we prove a limiting absorption
principle for $P(h)$ (Theorem \ref{abslimit1}) and we get a pointwise
asymptotic expansion for the derivative of the spectral shift function.

{\sl The paper is organized as follows}:
In the next section, we recall some well-known results concerning the spectra of a periodic
Schr\"odinger operator (Subsection \ref{premi}) and we state the
assumptions and the results precisely (Subsection \ref{main}).
We give an outline of the proofs in Subsection
\ref{outproof}. Section \ref{proofs} is devoted to the proofs.
Roughly, we introduce a class
of symbols and the corresponding $h$-Weyl operators (Subsection \ref{pseudocalculus}).
In the subsection \ref{EffHam} we recall the effective Hamiltonian method and
we give a representation of the derivative of the spectral shift function, denoted by
$\zeta'_h(\cdot).$
The proof of the weak asymptotic expansion of $\xi'_h$ is given
in Subsection \ref{pw-SSFh}.
We establish a limiting absorption principle for $P(h)$ in the subsection \ref{abslimTh}
At last, the pointwise asymptotic expansion of $\xi'_h$
is proved in Subsection \ref{p-dSSFh}.

\section{Statements}

\Subsection{Preliminaries}\label{premi}

Let $\Gamma=\underset{i=1}{\overset{n}{\oplus}}{\mathbb Z}e_i$ be a lattice generated
by some basis $(e_1,e_2,\cdots,e_n)$ of ${\mathbb R}^n.$
The dual lattice $\Gamma^*$ is given by
$\Gamma^*:=\{\gamma^*\in {\mathbb R}^n;\
\langle \gamma | \gamma^* \rangle \in 2\pi{\mathbb Z},\
\forall \gamma\in \Gamma\}.$
A fundamental domain of $\Gamma$ (resp. $\Gamma^*$)
is denoted by $E$ (resp. $E^*$). If we identify opposite edges
of $E$ (resp. $E^*$) then it becomes a flat torus
denoted by $\displaystyle {\mathbb T}={\mathbb R}^n/\Gamma$ (resp.
$\displaystyle {\mathbb T}^*={\mathbb R}^n/\Gamma^*).$

Let $V$ be a real-valued potential, ${\mathcal C}^\infty$ and
$\Gamma-$periodic. For $k\in {\mathbb R}^n,$ we define the
operator $P(k)$ on $L^ 2({\mathbb T})$ by
$P(k):=(D_{y}+k)^2+V(y).$ The operator $P(k)$ is a
semi-bounded self-adjoint with $k$-independent domain
$H^ 2({\mathbb T}).$ Since the resolvent of $P(k)$ is compact,
$P(k)$ has a complete set of (normalized) eigenfunctions
$\Phi_{n}(\cdot,k)\in H^{2}({\mathbb T}),\ n\in {\mathbb N},$
called Bloch functions. The corresponding eigenvalues accumulate at
infinity and we enumerate them according to their multiplicities,
$\lambda_{1}(k)\leq \lambda_{2}(k)\leq \cdots .$ The operator
$P(k)$ satisfies the identity $e^{-iy\cdot
\gamma^*}P(k)e^{iy\cdot \gamma^*}=P(k+\gamma^*),\ \forall
\gamma^*\in \Gamma^*,$ then for every $p\geq 1,$ the function
$k\mapsto \lambda_{p}(k)$ is $\Gamma^{*}-$periodic.

Ordinary perturbation theory shows that $\lambda_{p}(k)$ are
continuous functions of $k$ for any fixed $p,$ and
$\lambda_{p}(k)$ is even an analytic function of $k$ near any
point $k_{0}\in  {\mathbb T}^{*}$ where $\lambda_{p}(k_{0})$ is
a simple eigenvalue of $P(k_{0}).$ The function $\lambda_{p}(k)$
is called the band function and the closed intervals
$\Lambda_{p}:=\lambda_{p}({\mathbb T}^*)$ are called bands. See
\cite{ReSi78_01}, \cite{Sj91_01} and also \cite{Sk85_01, Sk85_02}.

Consider the self-adjoint operator on $L^{2}({\mathbb R}^n)$
with domain $H^{2}({\mathbb R}^n)$:
\begin{equation}\label{freeoperator}
P_{0}=-\Delta_x +V(x),\quad \text{where}\ \Delta_x=\sum_{j=1}^n\frac{\partial^2}{\partial x_j^2}.
\end{equation}

The spectrum of $P_0$ is absolutely continuous
(see \cite{Th73_01}) and consists of the bands $\Lambda_{p},\ p=1,2,\cdots$.
Indeed,
$\displaystyle
\sigma(P_{0})=\sigma_{\rm ac}(P_{0})=\underset{p\geq
1}{\cup}\Lambda_{p}.$ See also \cite{Sh79_01}.

\begin{definition}\label{Fermisurface}
Let $\mu\in {\mathbb R}$ and $F(\mu)=\big\{k\in {\mathbb
T}^*;\ \mu\in \sigma\big(P(k)\big)\big\}$ the corresponding
Fermi-surface.
\begin{itemize}
\item[a)] We will say that $\mu\in \sigma(P_{0})$ is a simple energy level
if and only if $\mu$ is a simple eigenvalue of $P(k),$ for
every  $k\in F(\mu).$

\item[b)] Assume that $\mu$ is a simple energy level of $P_{0}$
and let $\lambda(k)$ be the unique eigenvalue defined on a
neighborhood of $F(\mu)$ such that $\lambda(k)=\mu,$ 
for all $k\in F(\mu).$ We say that $\mu$ is a
non-critical energy  of $P_0$ if $d_{k}\lambda(k)\not=0$ for
all $k\in F(\mu).$
\end{itemize}
\end{definition}

Note that in one dimension case $F(\mu)$ is just a finite set of points.

Now, let us recall some well-known facts about the density of states
associated with $P_0,$ see \cite{Sh79_01}.
The density of states measure $\rho$
is defined as follows:
\begin{equation}
\rho(\mu):=\frac{1}{(2\pi)^n}\sum_{p\geq 1}
\int_{\{k\in E^*;\ \lambda_p(k)\leq \mu\}}\, dk.
\end{equation}

Since the spectrum of $P_{0}$ is absolutely continuous,
the measure $\rho$ is absolutely continuous with respect to
the Lebesgue measure $d\mu.$ Therefore
the density of states, $\frac{d\rho}{dE}(E),$ of $P_{0}$
is locally integrable.

\Subsection{Results}\label{main}

We now consider the perturbed periodic Schr\"odinger operator:
\begin{equation}
P(h):=P_0+\varphi(hx),\quad h\searrow 0,
\end{equation}
where $\varphi\in C^\infty({\mathbb R}^n;\mathbb R)$ and satisfies:
\begin{hyp}\label{h1}
There exists $\delta>0$ such that $\forall\alpha\in \mathbb N^n,$
$\exists C_\alpha>0$ s.t.
$$
\big|\partial_x^\alpha\varphi(x)\big|\leq C_\alpha(1+\vert
x\vert)^{-\delta-|\alpha|}\ \text{uniformly on}\ x\in \mathbb R^n.
$$
\end{hyp}
The operator $P(h)$ is self-adjoint, semi-bounded on $L^2(\mathbb
R^n)$ with domain $H^2(\mathbb R^n).$
\\
The assumption \ref{h1} and the perturbation theory (Weyl theorem) give:
\begin{equation}
\sigma_{\rm ess}\big(P(h)\big)=\sigma_{\rm
ess}(P_0)=\sigma(P_0)=\bigcup_{p\geq 1}\Lambda_p.
\end{equation}
Recall that $\sigma_{\rm ess}(A),$ the essential spectrum of $A,$ is
defined by $\sigma_{\rm ess}(A)=\sigma(A)\setminus \sigma_{\rm
disc}(A),$ where $\sigma_{\rm disc}(A)$  is the set of isolated
eigenvalues of $A$ with finite multiplicity. Here $A$ is an
unbounded operator on a Hilbert space.

Our first theorem in this section concerns the weak asymptotic of $\zeta'_h(\mu).$
Let $I=]a,b[\subset \mathbb R.$
\begin{theorem}[Weak asymptotic]\label{w-SSFh}
Assume \ref{h1} with \fbox{$\delta>n$}.
For $f\in {\mathcal C}_0^\infty(I),$ the operator $\Big[f(P(h))-f(P_{0})\Big]$
is of trace class and
\begin{equation}
{\rm tr}\Big[f(P(h))-f(P_{0})\Big]\sim
h^{-n}\sum_{j=0}^{+\infty}a_{j}(f)h^j,\quad \text{when}\
h\searrow 0,
\end{equation}
with
\begin{equation}
a_{0}(f)=(2\pi)^{-n}\sum_{p\geq 1}\int_{{\mathbb R}^n_x}\int_{E^*}
\Big[f\big(\lambda_p(k)+\varphi(x)\big)-
f\big(\lambda_p(k)\big)\Big]\,dk\,dx.
\end{equation}
The coefficients $f\rightarrow a_j(f)$ are distributions of finite
order $\leq j+1.$  Moreover, if $\mu$ is a non-critical energy
of $P_0$ for all $\mu\in I,$ then $a_j(f)=-\langle
\gamma_j(\cdot),f\rangle,$ for all $f\in {\mathcal C}_0^\infty(I).$ Here
$\gamma_j(\mu)$ are smooth functions of $\mu\in I$. In
particular,
\begin{equation}\label{a1}
\gamma_0(\mu)=\frac{\rm d}{{\rm d}\mu}\left[\int_{\mathbb
R^n_x}\Big\{\rho\big(\mu\big)-\rho\big(\mu-\varphi(x)\big)\Big\} \,
dx\right].
\end{equation}
\end{theorem}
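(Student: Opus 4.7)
My plan has three main stages.

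First, I would establish the trace class property. Using an almost analytic extension $\widetilde f\in C_0^\infty(\mathbb C)$ with $\bar\partial\widetilde f=\mathcal O(|\mathrm{Im}\,z|^\infty)$, the Helffer--Sj\"ostrand formula and the resolvent identity yield
\begin{equation*}
f(P(h))-f(P_0) = -\frac{1}{\pi}\int_{\mathbb C}\bar\partial\widetilde f(z)\,(z-P(h))^{-1}\varphi(hx)(z-P_0)^{-1}\,dL(z).
\end{equation*}
Factoring $\varphi(hx) = \langle hx\rangle^{-s}\psi_h(x)$ with $n<s<\delta$ (possible by \ref{h1}), the operator $\langle hx\rangle^{-s}(z-P_0)^{-N}$ is trace class for $N$ large enough, with norm controlled essentially by $h^{-n}\|\langle y\rangle^{-s}\|_{L^1}$, finite precisely because $s>n$. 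This yields trace class together with the preliminary bound $\mathrm{tr}\,[f(P(h))-f(P_0)]=\mathcal O(h^{-n})$.

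Second, to extract the full asymptotic I would invoke the effective Hamiltonian method of \cite{Di02_01,DiZe03_01,Di05_01}: a Grushin reduction replaces the study of $(z-P(h))^{-1}$ by the inversion of a matrix-valued $h$-Weyl operator $E_{-+}(z;h)$ whose principal symbol matrix has eigenvalues $\lambda_p(k)+\varphi(x)-z$ on the torus variable $k\in E^*$ and the position $x\in\mathbb R^n$. Substituting into the Helffer--Sj\"ostrand representation and reorganizing cyclically rewrites $\mathrm{tr}[f(P(h))-f(P_0)]$ as the semiclassical trace of an $h$-pseudodifferential operator. The standard Weyl trace expansion
\begin{equation*}
\mathrm{tr}\,\mathrm{Op}_h^w(a)=(2\pi h)^{-n}\!\!\int\!\! a\,dx\,d\xi \;\sim\; h^{-n}\sum_{j\geq 0}h^j\!\!\int\!\! a_j\,dx\,d\xi
\end{equation*}
then yields the desired series; after performing the $z$-integral and summing over the spectral branches of the effective Hamiltonian, the leading coefficient unwinds to exactly the formula claimed for $a_0(f)$. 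At order $j\geq 1$, the symbolic calculus (subprincipal symbol, Moyal brackets, resolvent expansions) introduces at most $j+1$ derivatives of $f$, which accounts for the distributional order bound.

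Third, under the non-critical hypothesis I would obtain the smoothness of $\gamma_j$ on $I$ by the coarea formula applied to the level sets $\{\lambda_p(k)+\varphi(x)=\mu\}$; since $d_k\lambda_p\neq 0$ on $F(\mu)$ and $\varphi$ is smooth, these sets are smooth submanifolds depending smoothly on $\mu\in I$. This rewrites each $a_j(f)$ as $-\int_I\gamma_j(\mu)f'(\mu)\,d\mu$ with $\gamma_j\in C^\infty(I)$. For $j=0$, disintegrating the $k$-integral along level sets of $\lambda_p$, summing over $p$, and recognizing the resulting $k$-density as $\frac{d\rho}{d\mu}$ produces precisely the formula for $\gamma_0(\mu)$ stated in \eqref{a1}.

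The main obstacle I expect is the uniform setup of the effective Hamiltonian reduction: showing it works uniformly for $z$ in a complex neighborhood of $I$ up to $\mathrm{Im}\,z\to 0$, controlling the Grushin error terms so that they contribute only to lower-order $h$ corrections inside the trace, and handling the sum over the infinitely many bands $p$ that a priori could meet $I$. Only finitely many bands contribute modulo $\mathcal O(h^\infty)$, but making this truncation rigorous at the level of the trace (rather than formally) is the delicate point.
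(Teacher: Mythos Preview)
Your overall strategy matches the paper's: Helffer--Sj\"ostrand functional calculus, Grushin reduction to the effective Hamiltonian $E_{-+}$, then symbolic trace expansion and a level-set argument for the smoothness of the $\gamma_j$. The paper's own proof (Subsection~\ref{pw-SSFh}) is itself only an outline, deferring details to \cite{Di93_01}.

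One correction to your Stage~1: the direct trace-class argument has a gap as written. A single resolvent $(z-P_0)^{-1}$ gains only two derivatives, so $\langle hx\rangle^{-s}(z-P_0)^{-1}$ is not trace class when $n\geq 2$; you would need $(z-P_0)^{-N}$ with $2N>n$, but the resolvent identity in your display supplies only first powers, and you do not say how the higher powers are produced. The paper sidesteps this by establishing the trace-class property \emph{through} the effective Hamiltonian (Lemma~\ref{trace}, Proposition~\ref{optr}): one shows $\varphi(r)E_{0,+}(z,h):L^2(\mathbb T^*;\mathbb C^N)\to L^2(\mathbb R^n)$ is trace class when $\delta>n$, and then the cyclicity argument gives Proposition~\ref{HeSjFormula}, which is simultaneously the trace-class statement and the representation formula you need for the expansion. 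Since you invoke the Grushin reduction in Stage~2 anyway, your Stage~1 is redundant and can simply be dropped in favor of this.

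Your two anticipated obstacles are both harmless. Only finitely many bands meet the support of $f$ because $\lambda_p(k)\to+\infty$ uniformly and $\varphi$ is bounded, so the sum over $p$ truncates. Uniformity of the reduction as $\mathrm{Im}\,z\to 0$ is handled by splitting the $z$-integral at $|\mathrm{Im}\,z|=h^\epsilon$ for a fixed $\epsilon\in(0,\tfrac12)$: the strip $|\mathrm{Im}\,z|\leq h^\epsilon$ contributes $\mathcal O(h^\infty)$ since $\bar\partial\widetilde f=\mathcal O(|\mathrm{Im}\,z|^\infty)$ and the trace is $\mathcal O(|\mathrm{Im}\,z|^{-1})$, while on $|\mathrm{Im}\,z|\geq h^\epsilon$ the symbol of $[E_{j,-+}^{-1}\partial_z E_{j,-+}]_{j=0}^1$ has a uniform asymptotic expansion in powers of $h$.
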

The proof of Theorem \ref{w-SSFh} is contained in subsection \ref{pw-SSFh}.

Let $[a,b]\subset \mathbb R.$ Assume that:
\begin{hyp}\label{h2}
for all $\mu\in [a,b],$ $\mu$ is a non-critical energy of
$P_0$.
\end{hyp}
For all $\mu\in [a,b],$ let $\lambda(k)$ be the unique
eigenvalue defined on a neighborhood of $F(\mu)$ such that
$\lambda(k)=\mu.$ We assume that for all $(k,r)\in \mathbb{T}^*\times{\mathbb R}^n$
such that $\mu=\lambda(k)+\varphi(r)\in \sigma (P_0)\cap [a,b]$, $\mu$ is a simple energy level,
and that:
\begin{hyp}\label{h3}
$\vert\nabla\lambda(k)\vert^2-r\nabla\varphi(r)\Delta\lambda(k)>0,$
for all $(k,r)$ s.t. $\lambda(k)+\varphi(r)\in [a,b].$
\end{hyp}
{\bf Remark.} Note that the assumption \ref{h2} is fulfilled in the bottom of the spectrum
of $P_0$. Moreover, assuming \ref{h2} the hypothesis \ref{h3} is satisfied
if $\Vert\varphi\Vert_\infty+\Vert x\nabla\varphi\Vert_\infty<<1$,
(see \cite{ReSi78_01}, \cite{Sk85_01, Sk85_02}).

Our main result concerning the derivative of the
spectral shift function is the following.
\begin{theorem}[Pointwise asymptotic]\label{dSSFh}
Assume \ref{h2}, \ref{h3} and \ref{h1} with \fbox{$\delta>n$}.
Then the following asymptotic expansion holds:
\begin{equation}\label{DimZerasym1}
\zeta'_h(\mu)\sim h^{-n}\sum_{j\geq
0}\gamma_j(\mu)h^{j}\quad {\rm as}\ h\searrow 0,
\end{equation}
uniformly for $\mu\in [a,b].$ \\
The coefficients $\big(\gamma_j(\mu)\big)_{j\geq 0}$ are given in Theorem \ref{w-SSFh}.
Furthermore, this expansion has derivate in $\mu$ to any order.
\end{theorem}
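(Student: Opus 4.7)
The strategy is to upgrade the weak asymptotic of Theorem \ref{w-SSFh} to a pointwise asymptotic by a semiclassical Tauberian argument of Robert--Tamura type, with the limiting absorption principle of Theorem \ref{abslimit1} as the essential new input. The effective Hamiltonian reduction from Subsection \ref{EffHam} will let us translate every statement about $P(h)$ into a statement about an $h$-pseudodifferential matrix $E^{-+}(z,h)$ whose principal symbol on $T^*\mathbb R^n$ is governed by $\lambda_p(k)+\varphi(r)$, and on which semiclassical microlocal techniques apply directly.

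The first step is to realize $\zeta'_h(\mu)$ as a boundary value of a resolvent trace. After suitable sandwiching by weights that make the difference trace-class, one has, for $\mu\in [a,b]$,
\begin{equation*}
\zeta'_h(\mu) \;=\; -\tfrac{1}{\pi}\,\mathrm{Im}\,\mathrm{tr}\bigl[R(\mu+i0;P(h)) - R(\mu+i0;P_0)\bigr],
\end{equation*}
equivalently expressed through the effective Hamiltonian by an identity of the form $\zeta'_h(\mu)=-\tfrac{1}{\pi}\mathrm{Im}\,\mathrm{tr}\bigl(E^{-+}(\mu+i0,h)^{-1}\partial_\mu E^{-+}(\mu,h)\bigr)$ modulo a background contribution from $P_0$. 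Thanks to Theorem \ref{abslimit1}, the boundary values $R(\mu\pm i0;P(h))$ exist in the weighted $L^2$ topology with norm controlled by $Ch^{-1}$, uniformly on $[a,b]$, and the same bound transfers to $E^{-+}(\mu\pm i0,h)^{-1}$.

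The second step is the Tauberian argument. Fix $\theta\in\mathcal C_0^\infty(\mathbb R)$ with $\int\theta=1$ and set $\theta_\varepsilon(\mu)=\varepsilon^{-1}\theta(\mu/\varepsilon)$. For each $\mu_0\in[a,b]$ write
\begin{equation*}
\zeta'_h(\mu_0) \;=\; (\zeta'_h\ast\theta_\varepsilon)(\mu_0) \;+\; \bigl[\zeta'_h(\mu_0)-(\zeta'_h\ast\theta_\varepsilon)(\mu_0)\bigr].
\end{equation*}
The smoothed term is tested against the compactly supported function $\mu\mapsto\theta_\varepsilon(\mu_0-\mu)$, so Theorem \ref{w-SSFh} applied with $f=\theta_\varepsilon(\mu_0-\cdot)$ provides an expansion to any order $N$ with remainder $\mathcal O(h^{N+1-n}\varepsilon^{-N-2})$, using that $a_j$ has finite order at most $j+1$. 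The remainder term $\zeta'_h(\mu_0)-(\zeta'_h\ast\theta_\varepsilon)(\mu_0)$ is bounded by $\varepsilon$ times $\sup_{\mu\in[a,b]}|\partial_\mu\zeta'_h(\mu)|$, which in turn is controlled by the LAP and the second-derivative version of Stone's formula by $Ch^{-n-1}$. Choosing $\varepsilon=h^{K}$ with $K$ sufficiently large balances the two errors and yields the claimed expansion with uniform remainder. Iterating the same scheme on $\partial_\mu^k\zeta'_h$, whose LAP bounds and effective Hamiltonian representation are obtained by differentiating those for $\zeta'_h$, gives the differentiability statement and identifies the coefficients with $\partial_\mu^k\gamma_j$.

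The main obstacle is the sharpness of the a priori control on the modulus of continuity of $\zeta'_h$ at the semiclassical scale. A trivial bound on $\partial_\mu\zeta'_h$ would only permit $\varepsilon$ of order $h^{O(1)}$ in a sub-optimal way that destroys the tail of the series; it is precisely the $\mathcal O(h^{-1})$ bound on the boundary resolvent provided by Theorem \ref{abslimit1} that allows $\varepsilon$ to be taken as an arbitrary power of $h$ and thereby recovers the full asymptotic expansion. Hypothesis \ref{h3} enters here through the non-trapping character it imposes on the principal symbol of the effective Hamiltonian, which is what drives the Mourre-type estimate underlying Theorem \ref{abslimit1}; the simplicity of the energy level $\mu$ for $(k,r)$ with $\lambda(k)+\varphi(r)=\mu$ ensures that the reduction to a scalar effective Hamiltonian is globally meaningful on a neighborhood of the energy shell in $[a,b]$.
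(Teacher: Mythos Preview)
The overall architecture you outline---effective Hamiltonian reduction, limiting absorption principle, then a Tauberian passage from weak to pointwise asymptotics---is the right shape, but the specific Tauberian balancing you propose does not close. With $\varepsilon = h^{K}$ your two errors are
\[
O(\varepsilon\, h^{-n-1}) \;=\; O\bigl(h^{K-n-1}\bigr)
\qquad\text{and}\qquad
O\bigl(h^{N+1-n}\varepsilon^{-N-2}\bigr) \;=\; O\bigl(h^{\,N+1-n-K(N+2)}\bigr).
\]
To make the first $O(h^{N_0+1-n})$ you need $K\ge N_0+2$; but then $1-K<0$ and the second exponent $N(1-K)+1-n-2K$ tends to $-\infty$ as $N\to\infty$, so that error \emph{blows up} rather than becoming small. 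No choice of $K$ and $N$ makes both errors $o(h^{N_0-n})$ for arbitrary $N_0$. In other words, the crude a~priori bound $|\partial_\mu\zeta'_h|\le Ch^{-n-1}$ (even granting it) is far too weak to upgrade the weak expansion to a full pointwise one; the LAP gives the correct $O(h^{-1})$ resolvent bound, but by itself this does not control the modulus of continuity of $\zeta'_h$ at scales $h^{K}$ with $K$ large.

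What the paper actually does is different in substance, not just in presentation. Via Corollary~\ref{repre2} it reduces to the scalar $h$-pseudodifferential operator $B_s=H^w+h^2G^w(\cdot\,;s,h)$, whose principal symbol $\lambda(k)+\varphi(r)$ is $s$-independent and, by \ref{h3}, non-trapping on $[a,b]$. It then applies the Robert--Tamura method to $B_s$ uniformly in the compact parameter $s\in\Omega\cap\mathbb R$, and takes $s=\mu$ at the end. The Robert--Tamura scheme is not a balancing of mollification against a weak trace formula: it passes to the \emph{time domain}, constructs a long-time parametrix for $(hD_t-B_s)U=0$, and uses non-trapping to show that the large-time contribution to the Fourier representation of the local spectral density is $O(h^{\infty})$. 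The short-time piece is then computed by stationary phase and produces the coefficients $\gamma_j$. This long-time decay of the propagator trace is exactly the missing ingredient in your sketch; without it, a Tauberian argument cannot go beyond (at best) the leading term with sharp remainder.
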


\begin{remark} Theorems \ref{w-SSFh} and \ref{dSSFh} still true also in the case
when the potential $\varphi(x)$ dependent on $h$, i.e.
$\varphi(x,h)=\varphi(x)+h\varphi_1(x)+h^2\varphi_2(x)\cdots$ in
$S^{\delta}(1).$ See the next section for the definition of
$S^{\delta}(1).$ (Subsection \ref{pseudocalculus}).
\end{remark}

\Subsection{Outline of the proofs}\label{outproof} Let
$Q_1(h)=Q^w_1(x,hD_x)$, $Q_0(h)=Q^w_0(x,hD_x)$ be two
$h$-pseudodifferential operators such that $Q_j(h)=Q_j^*(h)$,
($j=0,1$), and
$$
(Q_1(h)+i)^{-1}-(Q_0(h)+i)^{-1}
$$
is an operator of trace class. In this case, Theorem \ref{w-SSFh} is
well-known see \cite{Ro87_01} and the references therein. On the
other hand, in the case of non-trapping geometrie, the asymptotic
follows from the results of Robert-Tamura (see \cite{RoTa84_01,
RoTa87_01, RoTa88_01} and also \cite{Ro94_01}). The main ingredient
in the Robert-Tamura method is the limiting absorption theorem and
the construction of a long-time parametrix for the time-dependent
equation
$$
\big(hD_t-Q^w_j(x,hD_x)\big)U_j(t)=0, \quad U_j(0)=I.
$$
In our case $P(h)=-\Delta+V(x)+\varphi(hx)$ is not an
$h$-pseudodifferential operator. In fact, when $(h\searrow 0)$,
there are two spatial scales in equation \eqref{one}. The first one
of the order of the linear dimension $\gamma$ of the periodicity
cell and the second one  of order $\frac{\gamma}{h}$ on which the
perturbation of the potential varies appreciably. To remedy this we
reduce the study of $P(h),$ to the one of a system of
$h$-pseudodifferential operators. More precisely, following
\cite{Di93_01, Di02_01, DiZe03_01, GeMaSj91_01, GuRaTr88_01}, we can
reduce the spectral study of $P(h)$ near any fixed energy $z$ to the
study of a finite system of $h$-pseudodifferential operators,
$E_{-+}(z,h)$, acting on $L^2(\mathbb T^{*n};\mathbb C^N)$. In
general, for the reduced problem, the dependence on the spectral
parameter is non-linear. However, in the case of simple band (see
assumption \ref{h2}) we show that
$$
E_{-+}(z,h)=z-\big(\lambda(k)+\varphi(r)+hK_1(k,r)+h^2K_2(k,r;z,h)\big),
$$
where $K_1\in S^{\delta+1}(\mathbb T^*\times {\mathbb R}^n)$ and
$K_2(\cdot;z,h)\in S^{\delta+2}({\mathbb T}^*\times {\mathbb R}^n),$
holomorphic with respect to $z$ in a small complex neighborhood
$\Omega$ of a bounded interval $I$. (See \eqref{cal}). Now,
considering $s\in \Omega\cap \mathbb R$ as a parameter and assuming
\ref{h3}, we can apply the Robert-Tamura approach to the hamiltonian
$B_s(k,-hD_k;h):=\lambda(k)+\varphi(-hD_k)+hK_1^w(k,-hD_k)+h^2G^w(k,-hD_k;s,h)$
where $G$ satisfies the same properties as $K_2$, and we obtain the
theorem \ref{dSSFh}. Here we use the following crucial argument: the
assumption \ref{h3} implies that the interval $I$ is a non-trapping
region of the classical hamiltonian associated to $B_s$ for all $s$
in the compact set $\Omega\cap \mathbb R$. In fact, $r\cdot
K_i(k,r)\in S^{\delta}({\mathbb T}^*\times {\mathbb R}^n)\subset
S^0({\mathbb T}^*\times {\mathbb R}^n),$ ($i=1,2$) then the
corresponding operators are bounded uniformly for $s\in \Omega\cap
\mathbb R$ and moreover, the principal symbol of $B_s$ does not
depend on $s$.

\section{Proofs}\label{proofs}

\Subsection{Definitions and notations}\label{pseudocalculus}

Let $H$ be a Hilbert space. The scalar product in $H$ will be denoted by
$\langle\cdot,\cdot\rangle$. The set of linear bounded operators
from $H_1$ to $H_2$ is denoted
by ${\mathcal L}(H_1,H_2)$.\\
For $(m,N)\in \mathbb R\times\mathbb N$ we denote by $S^m({\mathbb
T}^*\times\mathbb R^n; {\mathcal M}_N(\mathbb C))$ the space  of
$P\in C^\infty\big(\mathbb R^{2n}_{k,r}; {\mathcal M}_N(\mathbb
C)\big)$, $\Gamma^*$-periodic with respect to $k$, such that for all
$\alpha$ and $\beta$ in $\mathbb N^n$ there exists
$C_{\alpha,\beta}>0$ such that
\begin{equation}\label{symbolestimate}
\Vert \partial_r^\alpha\partial_k^\beta P(k,r)\Vert_{{\mathcal M}_N(\mathbb C)}
\leq C_{\alpha,\beta}\langle r\rangle ^{-m-|\alpha|},\quad \langle r\rangle=
\big(1+\vert r\vert^2\big)^{\frac{1}{2}},
\end{equation}
where ${\mathcal M}_N(\mathbb C)$ is the set of $N\times N$-matrices.
In the special case when $N=1$ (i.e., $P$ is real valued),
we will write $S^m({\mathbb
T}^*\times\mathbb R^n)$ instead of $S^m({\mathbb
T}^*\times\mathbb R^n; {\mathcal M}_1(\mathbb C))$.

If $P$ depends on a semi-classical parameter $h\in \rbrack
0,h_0\rbrack$ and possibly on other parameters as well, we require
\eqref{symbolestimate} to hold uniformly with respect to these
parameters. For $h$ dependent symbols, we say that $P(k,r;h)$ has an
asymptotic expansion in powers of $h$, and we write
$$
P(k,r;h)\sim\sum_{j=0}^\infty P_j(k,r) h^j,
$$
if for every $N\in \mathbb N$, $\displaystyle
h^{-(N+1)}\Big(P-\sum_{j=0}^N P_jh^j\Big)\in S^m\big({\mathbb
T}^*\times\mathbb R^n; {\mathcal M}_N(\mathbb C)\big)$.

For $P\in S^m({\mathbb T^*}\times\mathbb R^n; {\mathcal
M}_N(\mathbb C)) $, the $h$-Weyl operator $P=P^w(k,hD_k;h)={\rm
Op}_h^w(P)$ is defined by:
$$
P^w(k,hD_k;h)u(k)=(2\pi h)^{-n}\int\int e^{\frac{i}{h}(k-y)r}
P(\frac{k+y}{2},r;h)u(y)\,dy\,dr.\quad \text{Here}\
D_k=\frac{1}{i}\frac{\partial}{\partial k}.
$$

\Subsection{Effective Hamiltonian} \label{EffHam}

In this subsection, we recall the effective Hamiltonian method. More
precisely, we will construct a suitable auxiliary (so-called
Grushin) problem associated with the operator $\big(P(h)-z\big)$ for
$z$ in a small complex neighborhood of $I,$ where $I=[a,b]\subset
\mathbb R$ is some bounded interval. The reader can find more
details and the proofs of the results of this subsection in
\cite{HeSj90_01} (see also \cite{DiSj99_01,DiZe03_01,GeMaSj91_01}). 
For the reader convenience, let us point out the main
change in our situation and fix the notations.

Denote by $T_{\Gamma}$ the distribution in ${\mathcal S}'(\mathbb R^{2n})$ defined by
$\displaystyle
T_{\Gamma}(x,y)=\frac{1}{\text{vol}(E)h^{n}}
\sum_{\beta^*\in \Gamma^*}e^{i(x-hy)\frac{\beta^*}{h}}.
$
We recall that $E$ is a fundamental domain of $\Gamma.$

For $m\in \mathbb N,$ put $\displaystyle{\mathbb
L}^m:=\{u(x)T_{\Gamma}(x,y);\
\partial^{\alpha}_xu\in L^2(\mathbb R^n),\
\forall\alpha,\,|\alpha|\leq m\}.$

It was shown in \cite[Chapter 13, Proposition 13.5]{DiSj99_01}, that the operator $P(h)$ acting on
$L^2(\mathbb R^n)$ with domain $H^2(\mathbb R^n)$ is unitary equivalent to
\begin{equation}
{\mathbb P}_1(h):=\big(D_y+hD_x\big)^2+V(y)+\varphi(x),
\end{equation}
acting on ${\mathbb L}^0$ with domain ${\mathbb L}^2,$ and the following proposition holds.
\begin{proposition}\label{Grushinproblem}
Assume \ref{h1}. There exist $N \in {\mathbb N}$, a complex
neighborhood $\Omega$ of $I$, and a bounded operator $R_+$ in
${\mathcal L}\big({\mathbb L}^0;L^2(\mathbb T^*;\mathbb C^N)\big)$
such that for all $z\in \Omega$ and $0<h<h_0$ small enough,
the operator
\begin{equation}
{\mathcal P}_1(z,h):=
\begin{pmatrix}
{\mathbb P}_1(h) -z  & R_+^* \\
 R_+ & 0
\end{pmatrix}
:{\mathbb L}^2 \times L^2(\mathbb T^*;\mathbb C^N)
 \rightarrow {\mathbb L}^0 \times
L^2(\mathbb T^*;\mathbb C^N),
\end{equation}
is bijective with bounded two-sided inverse
\begin{equation}
{\mathcal E}_1(z,h):=
\begin{pmatrix}
E_1(z,h) &  E_{1,+}(z,h) \\
 E_{1,-}(z,h) & E_{1,-+}(z,h)
\end{pmatrix}.
\end{equation}
Here $E_{1,-+}:=E^{w}_{1,-+}(k,-hD_k;z,h)$ is an
$h-$pseudodifferential operator with symbol
\begin{equation}
E_{1,-+}(k,r;z,h)\sim \sum_{l\ge 0} E_{1,-+}^{l}(k,r;z)\, h^l,\quad
\forall\, 0<h<h_0,
\end{equation}
in $S^0\Big(\mathbb T^*\times \mathbb R^n;{\mathcal L}
(\mathbb C^N,\mathbb C^N)\Big)$.
\end{proposition}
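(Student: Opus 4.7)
The plan is to carry out a Grushin (Feshbach) reduction adapted to periodic Schr\"odinger operators, in the spirit of \cite{HeSj90_01, GeMaSj91_01, DiZe03_01, DiSj99_01}. In the Floquet-Bloch representation, the operator $\mathbb{P}_1(h)-z$ is identified with the $h$-Weyl quantization of the operator-valued symbol $(k,r)\mapsto P(k)+\varphi(r)-z$, with $P(k)=(D_y+k)^2+V(y)$ acting in $L^2(\mathbb{T})$; under \ref{h1}, $\varphi\in S^{\delta}\subset S^0$, so this fits the operator-valued semiclassical calculus on $L^2(\mathbb{T}^*;L^2(\mathbb{T}))$. To build $R_+$, I exploit that $I$ is bounded and the band functions $\lambda_p(k)$ are continuous on the compact torus $\mathbb{T}^*$, so only finitely many bands enter a small complex neighborhood $\Omega$ of $I$. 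Let $\Pi(k)$ be the Riesz projector of $P(k)$ onto the associated direct sum of eigenspaces, and $N$ an upper bound for $\mathrm{rank}\,\Pi(k)$ counted with multiplicity. Choosing a smooth frame $\Phi_1(\cdot,k),\ldots,\Phi_N(\cdot,k)\in L^2(\mathbb{T})$ whose span is $\mathrm{Range}\,\Pi(k)$ (padding by zero sections where needed), I set
\[ R_+(k)\colon u\mapsto\bigl(\langle u,\Phi_j(\cdot,k)\rangle_{L^2(\mathbb{T})}\bigr)_{1\le j\le N}, \]
and take its $h$-Weyl quantization; this delivers the required bounded $R_+\in\mathcal{L}\bigl(\mathbb{L}^0,L^2(\mathbb{T}^*;\mathbb{C}^N)\bigr)$, with $R_+R_+^*=\mathrm{Id}_{\mathbb{C}^N}$ and $R_+^*R_+=\Pi$ at the symbol level.

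Next I invert $\mathcal{P}_1(z,h)$. By construction, $P(k)-z$ is uniformly invertible on $\mathrm{Range}(\mathrm{Id}-\Pi(k))$ for $z\in\Omega$. A standard Schur block-matrix computation at the symbol level then yields an approximate two-sided inverse whose lower-right block has principal symbol
\[ E_{1,-+}^{0}(k,r;z)=z-\Pi(k)\bigl(P(k)+\varphi(r)\bigr)\Pi(k)\ \in\ S^0\bigl(\mathbb{T}^*\times\mathbb{R}^n;\mathcal{L}(\mathbb{C}^N,\mathbb{C}^N)\bigr). \]
Iterating with the Moyal product produces the full symbol expansion $\sum_l E_{1,-+}^l(k,r;z)h^l$, and a Neumann series in $h$ upgrades this approximate inverse to the exact two-sided inverse for $h$ small and $z\in\Omega$, yielding the operator $\mathcal{E}_1(z,h)$ of the statement.

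The principal obstacle is the possibility of crossings and multiplicities among the bands $\lambda_p(k)$ over $\mathbb{T}^*$: individual eigenprojections need not be globally smooth, so one cannot diagonalize $\Pi(k)(P(k)-z)\Pi(k)$ by a smooth eigenbasis of $L^2(\mathbb{T})$. The remedy built into the construction above is to retain the full $N$-dimensional spectral subspace $\mathrm{Range}\,\Pi(k)$, which keeps the matrix-valued symbol $E_{1,-+}$ of class $C^\infty$ through the crossings. Verifying compatibility of the construction with the symbol class of Subsection \ref{pseudocalculus} and extracting the classical $h$-expansion stated in the proposition is where the bulk of the technical work lies; the remaining manipulations are the by now standard ones of \cite{HeSj90_01, DiSj99_01, DiZe03_01}.
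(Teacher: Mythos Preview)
Your sketch follows exactly the Grushin/effective-Hamiltonian construction that the paper invokes from \cite{HeSj90_01, GeMaSj91_01, DiSj99_01, DiZe03_01}; indeed the paper does not reprove Proposition~\ref{Grushinproblem} but defers to those references, noting only (Remark~\ref{freeGrushin}) that $R_+$ depends solely on $P_0$. One small inconsistency to clean up: padding the frame by zero sections is incompatible with your claim $R_+R_+^*=\mathrm{Id}_{\mathbb{C}^N}$, and in any case a global smooth orthonormal trivialization of $\mathrm{Range}\,\Pi(k)$ over $\mathbb{T}^*$ is not automatic---the references sidestep this by taking $R_+$ built from a fixed ($k$-independent) family in $L^2(\mathbb{T})$ that is merely surjective onto $\mathrm{Range}\,\Pi(k)$, which suffices for invertibility of the Grushin block matrix without requiring $R_+R_+^*=\mathrm{Id}$.
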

\begin{remark}\label{freeGrushin}
\begin{itemize}
\item[(1)] We denote by ${\mathcal P}_0(z,h)$ and
$\displaystyle {\mathcal E}_0(z,h):=
\begin{pmatrix}
 E_0(z,h) & E_{0,+}(z,h) \\
E_{0,-}(z,h) & E_{0,-+}(z,h)
\end{pmatrix}
$
the operators given by Proposition \ref{Grushinproblem} when $\varphi=0.$
 \item[(2)] Note that,  $R_+$ depends only on the non-perturbed
periodic Schr\"odinger operator $P_0.$
See \cite[Proposition 2.1]{GeMaSj91_01} and
\cite[Chapter 13]{DiSj99_01}.
Therefore, we may take the same $R_+$ for
${\mathcal P}_1(z,h)$ and ${\mathcal P}_0(z,h).$
\end{itemize}
\end{remark}

The following well-known formulas are a consequence of
Proposition \ref{Grushinproblem} (see also \cite{HeSj90_01}),
for $j=0, 1.$
\begin{equation}\label{f1}
 \big({\mathbb P}_j(h)-z\big)^{-1}=
E_j(z,h)- E_{j,+}(z,h)
E_{j,-+}(z,h)^{-1} E_{j,-}(z,h),
\end{equation}
\begin{equation}\label{f2}
E_{j,-+}(z,h)^{-1}=-R_+\big({\mathbb P}_j(h)-z\big)^{-1} R_+^*\, ,
\end{equation}
and
\begin{equation}\label{f3}
\partial_z E_{j,-+}(z,h)=E_{j,-}(z,h) E_{j,+}(z,h).
\end{equation}
Here $\displaystyle{\mathbb P}_0(h):=\big(D_y+hD_x\big)^2+V(y).$

We observe that ${\mathcal P}_j(z,h)^{*}={\mathcal P}_j(\overline{z},h)$,
which implies that ${\mathcal E}_j(z,h)^{*}={\mathcal E}_j(\overline{z},h).$
From this, we deduce the following identity:
\begin{equation}\label{adjcc}
E_{j,-+}(z,h)^{*}=E_{j,-+}(\overline{z},h),\quad j=0,1.
\end{equation}

In the following, we write $[a_j]_{j=0}^1=a_1-a_0.$
\begin{lemma}\label{formula}
We have
\begin{equation}\label{f4}
\Big[E_{j,+}(z,h)\Big]_{j=0}^1= E_1(z,h) \varphi(r) E_{0,+}(z,h),
\end{equation}
\begin{equation}\label{f5}
\Big[E_{j,-}(z,h)\Big]_{j=0}^1= E_{0,-}(z,h) \varphi(r) E_1(z,h),
\end{equation}
and
\begin{equation}\label{f6}
\Big[E_{j,-+}(z,h)\Big]_{j=0}^1= E_{1,-}(z,h) \varphi(r)
E_{0,+}(z,h).
\end{equation}
In particular, if \ref{h1} is satisfied then
\begin{equation}\label{f7}
\Big[E_{j,-+}\big(k,r;z,h\big)\Big]_{j=0}^1\in S^\delta\Big(\mathbb
T^*_k\times {\mathbb R}^n_{r}; {\mathcal M}_N(\mathbb
C^N)\Big).
\end{equation}
\end{lemma}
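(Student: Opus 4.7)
The strategy is to derive all three identities from a single resolvent-type formula for the Grushin inverses $\mathcal{E}_j(z,h)=\mathcal{P}_j(z,h)^{-1}$. The key algebraic observation is that $\mathbb{P}_1(h)-\mathbb{P}_0(h)$ is multiplication by $\varphi(r)$, and by Remark \ref{freeGrushin}(2) we may take the \emph{same} operator $R_+$ in both Grushin problems, so
\[
\mathcal{P}_1(z,h)-\mathcal{P}_0(z,h)=\begin{pmatrix}\varphi(r)&0\\0&0\end{pmatrix}
\]
as an operator on $\mathbb{L}^2\times L^2(\mathbb{T}^*;\mathbb{C}^N)$. This is the only input that distinguishes the two Grushin problems.

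From here the standard identity $\mathcal{E}_1-\mathcal{E}_0=\mathcal{E}_1(\mathcal{P}_0-\mathcal{P}_1)\mathcal{E}_0=\mathcal{E}_0(\mathcal{P}_0-\mathcal{P}_1)\mathcal{E}_1$ gives two equivalent $2\times 2$ block representations of the difference. When the diagonal shift above is inserted, only the $(1,1)$-slot $\varphi(r)$ survives, so each of the nine block products collapses to a single term. Reading off the $(1,2)$-block of the first form yields (f4), the $(2,1)$-block of the second form yields (f5), and the $(2,2)$-block of either form yields (f6), the signs being governed by the convention $[a_j]_{j=0}^1=a_1-a_0$ recorded just before the lemma.

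For the symbol class assertion (f7), I would use the representation just obtained in (f6), namely $[E_{j,-+}]_{j=0}^1=E_{1,-}\,\varphi(r)\,E_{0,+}$. Hypothesis \ref{h1} places multiplication by $\varphi(r)$ in $S^\delta(\mathbb{T}^*\times\mathbb{R}^n)$, while the Grushin machinery from \cite{HeSj90_01,DiSj99_01,GeMaSj91_01} identifies $E_{1,-}$ and $E_{0,+}$ as $h$-Weyl operators with symbols in $S^0(\mathbb{T}^*\times\mathbb{R}^n;\mathcal{M}_N(\mathbb{C}))$. The $h$-Weyl composition theorem then propagates the $\langle r\rangle^{-\delta}$ decay of the middle factor to the full composition, placing the symbol of $[E_{j,-+}]_{j=0}^1$ in $S^\delta(\mathbb{T}^*\times\mathbb{R}^n;\mathcal{M}_N(\mathbb{C}))$.

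The main obstacle is precisely this last step: one cannot conclude (f7) just from boundedness of the outer factors. One must exploit the symbolic structure of $E_{1,-}$ and $E_{0,+}$ furnished by the Grushin construction together with the explicit composition rule for the class $S^m(\mathbb{T}^*\times\mathbb{R}^n;\mathcal{M}_N(\mathbb{C}))$, to see that the $r$-decay of $\varphi$ is preserved and not destroyed by the $h$-Weyl quantization. I would quote that composition theorem rather than grinding out the asymptotic expansion of the composite symbol by hand.
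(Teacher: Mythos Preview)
Your approach is essentially identical to the paper's own proof: the paper also derives (f4)--(f6) by writing $[\mathcal{E}_j]_{j=0}^1=\mathcal{E}_1(\mathcal{P}_0-\mathcal{P}_1)\mathcal{E}_0=-\mathcal{E}_0(\mathcal{P}_1-\mathcal{P}_0)\mathcal{E}_1$ with $[\mathcal{P}_j]_{j=0}^1=\begin{pmatrix}\varphi(r)&0\\0&0\end{pmatrix}$ and reading off the blocks, and then obtains (f7) as ``a simple consequence of (f6) and standard $h$-pseudodifferential calculus.'' Your paragraph on (f7) is in fact more explicit than the paper's one-line dismissal, correctly flagging that one needs the $S^0$ symbolic structure of $E_{1,-}$ and $E_{0,+}$ (not merely their boundedness) together with the composition rule for the classes $S^m(\mathbb{T}^*\times\mathbb{R}^n;\mathcal{M}_N(\mathbb{C}))$; one minor caveat is that the two forms of the resolvent identity give two \emph{different} expressions for the $(2,2)$-block ($-E_{1,-}\varphi E_{0,+}$ versus $-E_{0,-}\varphi E_{1,+}$), so only the first form yields (f6) as stated.
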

\begin{proof}
Identities \eqref{f4}-\eqref{f6} follow from the first resolvent equation
\begin{align}
\Big[{\mathcal E}_j(z,h)\Big]_{j=0}^1=
{\mathcal E}_1(z,h)\big[{\mathcal P}_0(z,h)-{\mathcal P}_1(z,h)\big]
{\mathcal E}_0(z,h)\nonumber\\
\hskip2.6cm=
-{\mathcal E}_0(z,h)\big[{\mathcal P}_1(z,h)-{\mathcal P}_0(z,h)\big]
{\mathcal E}_1(z,h)\nonumber
\end{align}
and the fact that $\Big[{\mathcal
P}_j(z,h)\Big]_{j=0}^1=\begin{pmatrix} \varphi(r) & 0\\ 0 &0
\end{pmatrix}.$

Formula \eqref{f7} is a simple consequence of \eqref{f6} and
standard $h$-pseudodifferential calculus.
\end{proof}
\begin{lemma}\label{trace}
Assume \ref{h1} with \fbox{$\delta>n$}, the operator
\begin{align}\label{f8}
\varphi(r) E_{0,+}(z,h) : L^2(\mathbb T^*;\mathbb C^N) \rightarrow
L^2({\mathbb R}^n),
\end{align}
and
\begin{align}\label{f9}
E_{0,-}(z,h)\varphi(r): L^2({\mathbb R}^n)\rightarrow L^2(\mathbb
T^*;\mathbb C^N),
\end{align}
are of trace class.
\end{lemma}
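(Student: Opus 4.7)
The plan is to prove \eqref{f8} by a $\mathbb Z^n$-indexed partition of unity in the $r$ variable that converts the decay of $\varphi$ into a summable trace-class estimate, and to deduce \eqref{f9} from it by duality. For the duality step, identity \eqref{adjcc} combined with the block structure ${\mathcal E}_0(z,h)^*={\mathcal E}_0(\bar z,h)$ yields $E_{0,-}(z,h)^*=E_{0,+}(\bar z,h)$, whence $\bigl(E_{0,-}(z,h)\varphi(r)\bigr)^*=\varphi(r)E_{0,+}(\bar z,h)$; since the trace class is closed under adjunction, \eqref{f9} follows from \eqref{f8}.

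For \eqref{f8}, I pick a smooth partition of unity $\{\chi_\gamma\}_{\gamma\in\mathbb Z^n}$ on $\mathbb R^n_r$ with $\operatorname{supp}\chi_\gamma\subset B(\gamma,1)$ and decompose
\[
\varphi(r)E_{0,+}(z,h)=\sum_{\gamma\in\mathbb Z^n}\chi_\gamma(r)\varphi(r)E_{0,+}(z,h).
\]
By \ref{h1}, $\|\chi_\gamma\varphi\|_\infty\le C\langle\gamma\rangle^{-\delta}$, so it is enough to establish a bound
\[
\|\chi_\gamma(r)\varphi(r)E_{0,+}(z,h)\|_1\le C'\langle\gamma\rangle^{-\delta}
\]
uniformly in $\gamma\in\mathbb Z^n$, since summing then yields finiteness of $\|\varphi(r)E_{0,+}(z,h)\|_1$ because $\delta>n$. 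The uniform bound comes from the Helffer--Sj\"ostrand construction recalled in Proposition \ref{Grushinproblem}: $R_+$ is built from the finitely many Bloch eigenfunctions $\Phi_{p_j}(\cdot,k)$ attached to the bands meeting $\Omega$, so the Schwartz kernel $E_{0,+}(r,k)$ (after the unitary identification $\mathbb L^0\simeq L^2(\mathbb R^n)$) is a finite combination of terms of the form $\alpha_j(k;z,h)\,\Phi_{p_j}(r\bmod\Gamma,k)\,e^{ik\cdot r}$, which is $C^\infty$ in $k\in\mathbb T^*$ and uniformly bounded in $r\in\mathbb R^n$. Expanding in Fourier series on $\mathbb T^*$, $E_{0,+}(r,k)=\sum_{\gamma^*\in\Gamma}a_{\gamma^*}(r)e^{i\gamma^*\cdot k}$ with $\|a_{\gamma^*}\|_{L^\infty(\mathbb R^n)}=O(\langle\gamma^*\rangle^{-M})$ for every $M$; each rank-one piece $\chi_\gamma\varphi\,a_{\gamma^*}\otimes e^{i\gamma^*\cdot k}$ has trace norm controlled by $|B(\gamma,1)|^{1/2}|\mathbb T^*|^{1/2}\|\chi_\gamma\varphi\|_\infty\|a_{\gamma^*}\|_\infty$, and summation in $\gamma^*\in\Gamma$ converges rapidly, yielding the claim.

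The main obstacle is the uniform-in-$r$, $C^\infty$-in-$k$ control of the Schwartz kernel $E_{0,+}(r,k)$. This is where the periodic structure of $P_0$ enters non-trivially: one must exhibit $R_+$ as a smooth section onto the span of the Bloch eigenfunctions attached to all bands intersecting $\Omega$, which is precisely the content of the Helffer--Sj\"ostrand construction \cite{HeSj90_01} recalled in Proposition \ref{Grushinproblem}. The $\Gamma$-periodicity of $\Phi_{p_j}(y\bmod\Gamma,k)$ together with rapid decay of the Fourier coefficients $a_{\gamma^*}$ then powers the partition-of-unity argument, and the summability condition $\delta>n$ enters only at the very last step.
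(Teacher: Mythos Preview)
Your duality reduction of \eqref{f9} to \eqref{f8} matches the paper. For \eqref{f8} itself, however, the paper takes a short operator-theoretic route that avoids any explicit kernel analysis: with $A=(\mathrm{Id}-h^2\Delta_{\mathbb T^*})^{-\delta/2}$, $B=\varphi(r)E_{0,+}(z,h)$, $C=B^*B$ and $D=A^{-1}CA^{-1}$, the symbol calculus (using only $\varphi\in S^\delta$ and $E_{0,+}(k,r;z,h)\in S^0$) gives $D\in S^0(\mathbb T^*\times\mathbb R^n)$, hence $D$ is bounded; positivity of $C$ then yields $0\le C^{1/2}\le\sqrt{\|D\|}\,A$, and since $\delta>n$ the operator $A$ is trace class on $L^2(\mathbb T^*)$, so $|B|=C^{1/2}$ is trace class.

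Your partition-of-unity/Fourier-series argument has a genuine gap. The claim $\|a_{\gamma^*}\|_{L^\infty(\mathbb R^n)}=O(\langle\gamma^*\rangle^{-M})$ is false: the factor $e^{ik\cdot r}$ in your Bloch-wave ansatz has $k$-derivatives that grow like $|r|^{|\alpha|}$, so the kernel is \emph{not} smooth in $k$ uniformly in $r\in\mathbb R^n$. Concretely, writing $r=\gamma_0+\tilde r$ with $\gamma_0\in\Gamma$ and $\tilde r\in E$, the factor $e^{ik\cdot\gamma_0}$ is a single Fourier mode on $\mathbb T^*$, and one obtains only the \emph{localized} bound $\|a_{\gamma^*}\|_{L^\infty(B(\gamma,1))}=O(\langle\gamma^*-\gamma\rangle^{-M})$. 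With this correction the double sum $\sum_{\gamma}\langle\gamma\rangle^{-\delta}\sum_{\gamma^*}\langle\gamma^*-\gamma\rangle^{-M}$ still converges, so your strategy is salvageable --- but as written the uniform Fourier decay is simply wrong. A secondary, softer issue is that your explicit Bloch-wave form of the kernel of $E_{0,+}$ presupposes a simple-band construction of $R_+$, which Lemma~\ref{trace} does not assume; the paper's argument sidesteps this entirely by using only the symbol-class membership $E_{0,+}\in S^0$ furnished by Proposition~\ref{Grushinproblem}.
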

\begin{proof} Since $\Big(E_{0,-}(z,h)\varphi(r)\Big)^*=\varphi(r)E_{0,+}(\overline{z},h)$
it suffice to prove \eqref{f8}. Without any loss of generality, we
may assume that $N=1$.

Consider the operator $A=\big({\rm Id}-h^2\Delta_{\mathbb
T^*}\big)^{-\frac{\delta}{2}}$ on $L^2(\mathbb T^*;\mathbb C).$
Set $B=\varphi(r)E_{0,+}(z,h)$, $C=B^*B$ and $D=A^{-1}CA^{-1}.$

Since $\varphi\in S^\delta(\mathbb R^{2n})$
and $E_{0,+}(k,r;z,h)\in S^0$, a standard result of $h$-pseudodifferential calculus
shows that $D\in S^0(\mathbb T^*_k\times{\mathbb R}^{n}_r).$
Therefore, $D$ extends to a bounded operator from $L^2(\mathbb
T^*;\mathbb C)$ into $L^2(\mathbb T^*;\mathbb C),$ see
\cite[Chapter 13]{DiSj99_01}. Combining this with the fact that
$C$ is positif, we get:
$$
0\le C=A D A
\le \Vert D\Vert \, A^2,
$$
which implies
$$
0\le C^{\frac{1}{2}}\le \sqrt{\Vert D\Vert}\, A.
$$
Since $\delta>n$ then $A:L^2(\mathbb T^*;\mathbb C) \rightarrow L^2(\mathbb T^*;\mathbb C)$
is of trace class and the lemma follows from the above inequality.
\end{proof}

\begin{remark}
Notice that if $P\in S^\delta({\mathbb T}^*\times\mathbb R^n; {\mathcal M}_N(\mathbb C))$
with $\delta>n$, then the operator $P^w(k,hD_k)$ is a trace class, see \cite{DiSj99_01}.
\end{remark}

\begin{proposition}\label{optr}
Assume \ref{h1} with \fbox{$\delta>n$}. For $z\in \Omega$ such that $\Im(z)\not=0,$ the operator
$$
\Big[
E_{j,+}(z,h)E_{j,-+}(z,h)^{-1} E_{j,-}(z,h)\Big]_{j=0}^1
$$
is of trace class
from $L^2(\mathbb R^n)$ to $L^2(\mathbb R^n)$ and
\begin{equation}\label{f10}
 {\rm tr}\Big(\Big[
E_{j,+}(z,h)E_{j,-+}(z,h)^{-1} E_{j,-}(z,h)\Big]_{j=0}^1\Big)=
{\rm tr}\Big(\Big[
E_{j,-+}(z,h)^{-1}\partial_z E_{j,-+}(z,h)
\Big]_{j=0}^1\Big).
\end{equation}
Here the operator in the right member of \eqref{f10} is defined
on $L^2(\mathbb T^*;\mathbb C^N).$
\end{proposition}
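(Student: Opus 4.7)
The statement has two parts: trace-class membership of the indicated difference on $L^2(\mathbb R^n)$, and the trace identity. The plan is to reduce both to a single telescoping decomposition that makes the trace-class structure manifest.

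For the trace-class property, I would set $A_j := E_{j,+}(z,h)$, $B_j := E_{j,-+}(z,h)^{-1}$, $C_j := E_{j,-}(z,h)$. Invertibility of $E_{j,-+}(z,h)$ for $\Im z\neq 0$ is guaranteed by formula \eqref{f2}, since ${\mathbb P}_j(h) - z$ is then boundedly invertible with norm bounded by $|\Im z|^{-1}$. Decompose
$$A_1 B_1 C_1 - A_0 B_0 C_0 = (A_1 - A_0)\, B_1 C_1 \;+\; A_0\, (B_1 - B_0)\, C_1 \;+\; A_0 B_0\, (C_1 - C_0).$$
By Lemma \ref{formula}, $A_1 - A_0 = E_1\,\varphi(r)\,E_{0,+}$, $C_1 - C_0 = E_{0,-}\,\varphi(r)\,E_1$, and $E_{1,-+} - E_{0,-+} = E_{1,-}\,\varphi(r)\,E_{0,+}$; the standard resolvent identity then gives $B_1 - B_0 = -B_1\,E_{1,-}\,\varphi(r)\,E_{0,+}\,B_0$. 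Thus every summand contains an explicit factor of $\varphi(r)\,E_{0,+}$ or its adjoint $E_{0,-}\,\varphi(r)$, both of which are trace class under the hypothesis $\delta>n$ by Lemma \ref{trace}. Composed with bounded operators on the remaining factors, each summand is of trace class on $L^2(\mathbb R^n)$.

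For the trace identity, I would apply cyclicity of trace to each of the three summands; this is legitimate because each summand contains an explicit trace-class factor while the remaining factors are bounded. Namely,
\begin{align*}
{\rm tr}\bigl((A_1 - A_0)\,B_1 C_1\bigr) &= {\rm tr}\bigl(B_1 C_1\,(A_1 - A_0)\bigr),\\
{\rm tr}\bigl(A_0\,(B_1 - B_0)\,C_1\bigr) &= {\rm tr}\bigl((B_1 - B_0)\,C_1 A_0\bigr),\\
{\rm tr}\bigl(A_0 B_0\,(C_1 - C_0)\bigr) &= {\rm tr}\bigl(B_0\,(C_1 - C_0)\,A_0\bigr).
\end{align*}
Summing these three identities, the right-hand side telescopes into ${\rm tr}\bigl(B_1 C_1 A_1 - B_0 C_0 A_0\bigr) = {\rm tr}\bigl([E_{j,-+}^{-1}\,E_{j,-}\,E_{j,+}]_{j=0}^1\bigr)$, and invoking formula \eqref{f3}, namely $\partial_z E_{j,-+} = E_{j,-}\,E_{j,+}$, rewrites this as the right-hand side of \eqref{f10}.

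The main subtlety, and the reason the argument is arranged this way, is that the individual operators $E_{j,+}\,E_{j,-+}^{-1}\,E_{j,-}$ and $E_{j,-+}^{-1}\,E_{j,-}\,E_{j,+}$ need not be trace class for each $j$ separately; only their $[\,\cdot\,]_{j=0}^1$-differences are. Hence a naive application of cyclicity at each $j$ individually is illegitimate. The telescoping therefore serves a dual purpose: it exposes a trace-class factor in every summand (securing membership) and provides the bookkeeping that licenses each cyclic permutation (securing the identity).
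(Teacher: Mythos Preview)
Your proof is correct and follows essentially the same approach as the paper: the same three-term telescoping decomposition of $A_1B_1C_1-A_0B_0C_0$, the same appeal to Lemmas \ref{formula} and \ref{trace} to locate a trace-class factor ($\varphi(r)E_{0,+}$ or $E_{0,-}\varphi(r)$) in each summand, and the same use of cyclicity together with \eqref{f3} to obtain the identity. Your write-up is in fact slightly more explicit than the paper's about why cyclicity is legitimate term by term and why a naive per-$j$ application would fail.
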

\begin{proof}
Let $z\in \Omega$ such that $\Im(z)\not=0,$ we have the following identity:
\begin{align}\label{ftr}
& \Big[E_{j,+}(z,h)E_{j,-+}(z,h)^{-1}
E_{j,-}(z,h)\Big]_{j=0}^1= \\
& \Big[\big([E_{j,+}(z,h)]_{j=0}^1\big)
E_{1,-+}(z,h)^{-1} E_{1,-}(z,h)
\Big]+ \nonumber \\
& \Big[E_{0,+}(z,h) E_{0,-+}(z,h)^{-1}
\big([E_{j,-}(z,h)]_{j=0}^1\big)
\Big]-  \nonumber\\
& \Big[E_{0,+}(z,h) E_{1,-+}(z,h)^{-1}
\big([E_{j,-+}(z,h)]_{j=0}^1\big)
 E_{0,-+}(z,h)^{-1} E_{1,-}(z,h)
\Big]. \nonumber
\end{align}
According to Lemmas \ref{formula} and \ref{trace}, all the term of
the right member in the last equality are of trace class. Using the
cyclicity of the trace and identity \eqref{f3}, we obtain the
proposition.
\end{proof}

Using again the cyclicity of the trace in \eqref{ftr} and the identity \eqref{f3} we obtain
\begin{equation}\label{cycltr}
\text{tr}\Big(\Big[
E_{j,-+}(z,h)^{-1}\partial_z E_{j,-+}(z,h)
\Big]_{j=0}^1\Big)=
\text{tr}\Big(\Big[
\partial_z E_{j,-+}(z,h) E_{j,-+}(z,h)^{-1}
\Big]_{j=0}^1\Big).
\end{equation}

The main result in this subsection is
\begin{proposition}\label{HeSjFormula}
Assume \ref{h1} with \fbox{$\delta>n$}. Let $\psi\in {\mathcal C}_{0}^{\infty}(\mathbb R)$
and let $\widetilde \psi$ be an almost analytic extension
of $\psi.$
Then the operator $[\psi(P(h))-
\psi(P_{0})]$ is of trace class as an operator
from $L^2(\mathbb R^n)$ to $L^2(\mathbb R^n)$ and
\begin{align}\label{f11}
& {\rm tr}[\psi(P(h))-\psi(P_{0})]={\rm tr}[\psi({\mathbb
P}_1(h))-\psi({\mathbb P}_{0}(h))]=
\\
& \hskip160pt -\frac{1}{\pi}\int_{\mathbb C}\overline{\partial}
\widetilde \psi(z) {\rm tr}\Big(\big[E_{j,-+}(z)^{-1}
\partial_{z}E_{j,-+}\big]_{j=0}^1\Big)\, L(dz).\nonumber
\end{align}
Here $\displaystyle\overline{\partial}=
\frac{\partial}{\partial\overline{z}}$ and $L(dz)=dxdy$ denotes the
Lebesgue measure on $\mathbb C.$
\end{proposition}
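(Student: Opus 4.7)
The plan is to apply the Helffer--Sj\"ostrand formula to each of $\psi(\mathbb{P}_0(h))$ and $\psi(\mathbb{P}_1(h))$, substitute the Schur decomposition \eqref{f1}, and use that the ``regular'' block $E_j(z,h)$ is holomorphic in $z\in\Omega$ to annihilate its contribution so that only the ``singular'' block $E_{j,+}E_{j,-+}^{-1}E_{j,-}$ survives; Proposition \ref{optr} then delivers both the trace-class property and the final trace identity.

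First, by the unitary equivalence $P(h)\simeq\mathbb{P}_1(h)$ and $P_0\simeq\mathbb{P}_0(h)$ recalled before Proposition \ref{Grushinproblem}, it suffices to work with the operators $\mathbb{P}_j(h)$, and the first equality in \eqref{f11} is immediate. I fix an almost analytic extension $\tilde\psi$ of $\psi$ whose imaginary support is small enough that $\mathrm{supp}(\tilde\psi)\subset\Omega$. Combining the Helffer--Sj\"ostrand formula
$$
\psi(\mathbb{P}_j(h))=-\frac{1}{\pi}\int_{\mathbb C}\bar\partial\tilde\psi(z)\,\big(z-\mathbb{P}_j(h)\big)^{-1}\,L(dz),\qquad j=0,1,
$$
with \eqref{f1} gives
$$
\psi(\mathbb{P}_1)-\psi(\mathbb{P}_0)=\frac{1}{\pi}\int_{\mathbb C}\bar\partial\tilde\psi(z)\,\Big([E_j(z,h)]_{j=0}^1-\big[E_{j,+}E_{j,-+}^{-1}E_{j,-}\big]_{j=0}^1\Big)\,L(dz).
$$

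Since Proposition \ref{Grushinproblem} makes $z\mapsto E_j(z,h)$ holomorphic in $\Omega$ and $\tilde\psi$ has compact support, Stokes' theorem applied to $\bar\partial[\tilde\psi\,E_j]=\bar\partial\tilde\psi\cdot E_j$ kills the $[E_j]_{j=0}^1$ integral. To exchange trace and integral in the remaining term I need a bound of the form $\big\|\big[E_{j,+}E_{j,-+}^{-1}E_{j,-}\big]_{j=0}^1\big\|_{\mathrm{tr}}\leq C|\Im z|^{-M}$ on $\Omega\setminus\mathbb R$ for some finite $M$; combined with $|\bar\partial\tilde\psi(z)|=\mathcal O(|\Im z|^N)$ for every $N$, this yields absolute convergence in trace norm, hence both the trace-class property of $\psi(\mathbb{P}_1)-\psi(\mathbb{P}_0)$ and Fubini. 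The trace-class factors are supplied by Lemma \ref{trace} via the identities \eqref{f4}--\eqref{f5}, while the only singular operator, $E_{j,-+}^{-1}$, is controlled by $\|E_{j,-+}(z,h)^{-1}\|=\mathcal O(|\Im z|^{-1})$ thanks to \eqref{f2} and the self-adjointness of $\mathbb{P}_j(h)$.

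Once the exchange is justified, identity \eqref{f10} of Proposition \ref{optr} applied pointwise in $z$ converts the remaining integrand into ${\rm tr}\big[E_{j,-+}(z)^{-1}\partial_z E_{j,-+}\big]_{j=0}^1$, yielding \eqref{f11}. The main obstacle is the uniform trace-norm estimate described above: one must verify that combining the trace-class factors provided by Lemma \ref{trace} with the growing operator $E_{j,-+}^{-1}=-R_+(\mathbb{P}_j(h)-z)^{-1}R_+^*$ produces only polynomial blow-up in $1/|\Im z|$, which the infinite-order flatness of $\bar\partial\tilde\psi$ on $\mathbb R$ then absorbs.
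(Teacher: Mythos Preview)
Your proposal is correct and follows essentially the same route as the paper: Helffer--Sj\"ostrand formula, substitution of \eqref{f1}, cancellation of the holomorphic block $[E_j]_{j=0}^1$, and then Proposition~\ref{optr} with \eqref{f10} to pass to the $E_{j,-+}^{-1}\partial_z E_{j,-+}$ form. The paper's proof is terser about the justification for interchanging trace and integral, whereas you spell out explicitly the needed polynomial bound $\|\,[E_{j,+}E_{j,-+}^{-1}E_{j,-}]_{j=0}^1\,\|_{\mathrm{tr}}\le C|\Im z|^{-M}$ and how it combines with $\bar\partial\tilde\psi=\mathcal O(|\Im z|^\infty)$; this is the correct justification and is implicit in the paper.
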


Recall that $\widetilde\psi\in {\mathcal C}_0^\infty(\mathbb{C})$ is an almost analytic extension
of $\psi,$ i.e. $\widetilde\psi_{|\mathbb R}=\psi$ and
$\overline{\partial}\widetilde\psi={\mathcal O}(|\Im(z)|^N)$ for all $N\in \mathbb N.$
We refer to \cite{DiSj99_01} for the existence of $\widetilde \psi.$

\begin{proof}
By Helffer-Sj\"ostrand formula (see \cite{HeSj89_01}), we have
\begin{equation}\nonumber
\psi({\mathbb P}_1(h))-\psi({\mathbb P}_{0}(h))=-\frac{1}{\pi}
\int_{\mathbb C} \overline{\partial} \widetilde \psi(z)
 \Big[(z-{\mathbb P}_1(h))^{-1}-(z-{\mathbb P}_0(h))^{-1}\Big]\,L(dz).
\end{equation}
Combining this with \eqref{f1}, we obtain
\begin{align}\label{f12}
&\psi({\mathbb P}_1(h))-\psi({\mathbb P}_{0}(h))=\frac{1}{\pi} \int_{\mathbb C}
\overline{\partial} \widetilde \psi(z)
 \big[E_j(z,h)]_{j=0}^1\,L(dz)\\
&\hskip100pt-\frac{1}{\pi} \int_{\mathbb C} \overline\partial
\widetilde \psi(z)
\Big[E_{j,+}(z,h)E_{j,-+}(z,h)^{-1}E_{j,-}(z,h)\Big]_{j=0}^1\,L(dz).\nonumber
\end{align}
Since $E_j(z,h),\,j=0,1$ is holomorphic in a neighborhood
of $\text{supp} (\widetilde \psi),$ the first term in the right
member of \eqref{f12} vanishes. Consequently,
\begin{equation}\nonumber
\psi({\mathbb P}_1(h))-\psi({\mathbb P}_{0}(h))= -\frac{1}{\pi}
\int_{\mathbb C} \overline\partial \widetilde \psi(z)
\Big[E_{j,+}(z,h)E_{j,-+}(z,h)^{-1}E_{j,-}(z,h)
\Big]_{j=0}^1\,L(dz).
\end{equation}
Using Proposition \ref{optr}, we conclude that $[\psi({\mathbb
P}_{1}(h))-\psi({\mathbb P}_{0}(h))]$ is of trace class and applying
\eqref{f10}, we obtain the second equality of \eqref{f11}. The first
equality follows from the fact that ${\mathbb P}_1(h)$ (resp.
${\mathbb P}_0(h)$)
 is unitary equivalent to $P(h)$ (resp.
$P_0$).
\end{proof}

Now, we recall  a representation of the derivative of the spectral
shift function in term of the effective Hamiltonian
$E_{j,-+}(z,h)$ (see \cite[Lemma 1]{Di05_01}).

Let $I\subset\mathbb R$ be some bounded interval and $\Omega$ be the complex neighborhood
of $I$ given by the proposition \ref{Grushinproblem}.
Put $\Omega_{\pm}=\Omega\cap\{z\in \mathbb C;\ \pm \Im(z)>0\}.$ \\
We introduce the functions $e_{\pm}(z)={\rm tr}\Big(\big[E_{j,-+}(z,h)^{-1}
\partial_{z}E_{j,-+}(z,h)\big]_{j=0}^1\Big),$ for $z\in \Omega_{\pm}.$\\
Since $E_{j,-+}(z,h)$ is holomorphic on $z$, we deduce from \eqref{adjcc} that
$$
\partial_{z}E_{j,-+}(z,h)^{*}=\partial_{z}E_{j,-+}(\overline{z},h).
$$
Using the fact that $\overline{{\rm tr}(A)}={\rm tr}(A^*)$ with \eqref{adjcc},
\eqref{cycltr} and the above equality we obtain
\begin{equation}\label{f13}
 \overline{e_{+}(z)}=e_{-}(\overline{z}),\quad \text{for all}\,\, z\in\Omega_{+}.
\end{equation}

\begin{lemma}\label{representationSSF} \cite[Lemma 1]{Di05_01}
Assume \ref{h1} with \fbox{$\delta>n$}. In ${\mathcal D}'(I),$ we
have
\begin{equation}\label{repSSF}
\zeta_h'(\mu)= \lim_{\epsilon\searrow 0} \frac{1}{2\pi i}
\Big[e_{+}(\mu+i\epsilon)-\overline{e_{+}(\mu+i\epsilon)}\Big].
\end{equation}
\end{lemma}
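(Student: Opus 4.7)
The plan is to start from the Helffer-Sjöstrand representation of $\mathrm{tr}[\psi(P(h))-\psi(P_0)]$ given in Proposition \ref{HeSjFormula}, split the integral over $\mathbb{C}$ into contributions from the upper and lower half-planes, and reduce each area integral to a boundary integral along lines parallel to $\mathbb{R}$ by Stokes' theorem, exploiting the holomorphy of $e_\pm$ on $\Omega_\pm$. Matching the result against the defining identity $\mathrm{tr}[\psi(P(h))-\psi(P_0)] = -\int\zeta_h'(\mu)\psi(\mu)\,d\mu$ then identifies $\zeta_h'$ as a boundary-value distribution.

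First I would fix $\psi\in\mathcal{C}_0^\infty(I)$ with an almost analytic extension $\widetilde\psi\in\mathcal{C}_0^\infty(\mathbb{C})$ supported in $\Omega$ and rewrite \eqref{f11} as
\begin{equation*}
\mathrm{tr}[\psi(P(h))-\psi(P_0)] = -\frac{1}{\pi}\int_{\Omega_+}\overline{\partial}\widetilde\psi(z)\,e_+(z)\,L(dz) - \frac{1}{\pi}\int_{\Omega_-}\overline{\partial}\widetilde\psi(z)\,e_-(z)\,L(dz).
\end{equation*}
For each fixed $\epsilon>0$, I restrict the first integral to $\{\Im z>\epsilon\}$ and the second to $\{\Im z<-\epsilon\}$ (the full integrals are recovered by dominated convergence, using $\overline{\partial}\widetilde\psi(z)=\mathcal{O}(|\Im z|^N)$ for all $N$, which compensates any polynomial blow-up of $e_\pm$ near $\mathbb{R}$). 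On these truncated domains $e_\pm$ is holomorphic, so $\overline{\partial}(\widetilde\psi\,e_\pm)=\overline{\partial}\widetilde\psi\cdot e_\pm$.

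Next, I would apply Stokes' theorem via the identity $\overline{\partial}F\cdot L(dz)=\tfrac{1}{2i}\,d(F\,dz)$, with $F=\widetilde\psi\,e_\pm$. Because $\widetilde\psi$ has compact support in $\mathbb{C}$, only the horizontal boundary segments $\{\Im z=\pm\epsilon\}$ contribute. Computing the induced orientations carefully gives
\begin{equation*}
\int_{\Im z>\epsilon}\overline{\partial}\widetilde\psi\cdot e_+\,L(dz) = \frac{1}{2i}\int_{\mathbb{R}}\widetilde\psi(x+i\epsilon)\,e_+(x+i\epsilon)\,dx,
\end{equation*}
\begin{equation*}
\int_{\Im z<-\epsilon}\overline{\partial}\widetilde\psi\cdot e_-\,L(dz) = -\frac{1}{2i}\int_{\mathbb{R}}\widetilde\psi(x-i\epsilon)\,e_-(x-i\epsilon)\,dx.
\end{equation*}
Invoking the symmetry \eqref{f13}, $e_-(x-i\epsilon)=\overline{e_+(x+i\epsilon)}$, and letting $\epsilon\searrow 0$ so that $\widetilde\psi(x\pm i\epsilon)\to\psi(x)$, I combine the two contributions to obtain
\begin{equation*}
\mathrm{tr}[\psi(P(h))-\psi(P_0)] = -\frac{1}{2\pi i}\lim_{\epsilon\searrow 0}\int_{\mathbb{R}}\psi(\mu)\bigl[e_+(\mu+i\epsilon)-\overline{e_+(\mu+i\epsilon)}\bigr]\,d\mu.
\end{equation*}
Comparing with the SSF definition $\mathrm{tr}[\psi(P(h))-\psi(P_0)]=-\langle\zeta_h'(\cdot),\psi\rangle$ yields the identity \eqref{repSSF} in $\mathcal{D}'(I)$.

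The main technical obstacle is the Stokes step together with the passage to the limit $\epsilon\searrow 0$: one must verify that the oriented boundary integrals on $\{\Im z=\pm\epsilon\}$ have a well-defined distributional limit against any test function $\psi$. This is handled by the mechanism of Helffer-Sjöstrand calculus: the decay $\overline{\partial}\widetilde\psi=\mathcal{O}(|\Im z|^N)$ for all $N$ ensures integrability of $\overline{\partial}\widetilde\psi\cdot e_\pm$ on $\Omega_\pm$ even when $e_\pm$ has singularities on $\mathbb{R}$, while the trace-class bounds from Proposition \ref{optr} guarantee that all quantities written as traces are finite. The existence of the distributional limit of $e_+(\mu+i\epsilon)-\overline{e_+(\mu+i\epsilon)}$ then follows automatically from the equality with the left-hand side $\mathrm{tr}[\psi(P(h))-\psi(P_0)]$, which is an $\epsilon$-independent number.
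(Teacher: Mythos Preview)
Your proof is correct and follows essentially the same approach as the paper: start from Proposition~\ref{HeSjFormula}, split into upper and lower half-planes, use holomorphy of $e_\pm$ together with Stokes/Green to reduce to boundary integrals, invoke the symmetry \eqref{f13}, and pass to the limit $\epsilon\searrow 0$. The only cosmetic difference is that the paper shifts the \emph{argument} of $e_\pm$ by $\pm i\epsilon$ and keeps the integration domain as the full half-planes (so the boundary lies exactly on $\mathbb{R}$ and $\widetilde\psi$ restricts directly to $\psi$), whereas you truncate the \emph{domain} to $\{|\Im z|>\epsilon\}$ and then use $\widetilde\psi(x\pm i\epsilon)\to\psi(x)$; these are equivalent by a translation.
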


For the reader convenience we give the proof of the lemma.
\begin{proof}
Let $\psi\in {\mathcal C}_0^\infty(I).$ In the previous proposition, we proved that
\begin{equation}\nonumber
-\langle \zeta_h'(\cdot),\psi\rangle= {\rm
tr}[\psi(P(h))-\psi(P_{0})]= -\frac{1}{\pi}\int_{\mathbb
C}\overline{\partial} \widetilde \psi(z) {\rm
tr}\Big(\big[E_{j,-+}(z)^{-1}
\partial_{z}E_{j,-+}\big]_{j=0}^1\Big)\, L(dz).
\end{equation}

Since $e_{\pm}(z)={\mathcal O}\big(h^{-n}\vert\Im(z)\vert^{-2}\big)$
and $\overline{\partial} \widetilde \psi(z)={\mathcal
O}\big(\vert\Im(z)\vert^{2}\big),$ the integral in the identity
above converge. Thus the r.h.s. of the previous equality can be
written as
\begin{equation}\nonumber
-\langle \zeta_h'(\cdot),\psi\rangle=\lim_{\epsilon\searrow
0}-\frac{1}{\pi} \Big[\int_{\Im(z) >0}\overline{\partial} \widetilde
\psi(z) e_{+}(z+i\epsilon)\, L(dz)+ \int_{\Im(z)
<0}\overline{\partial} \widetilde \psi(z) e_{-}(z-i\epsilon)\,
L(dz)\Big].
\end{equation}
Since $e_{\pm}(z\pm i\epsilon)$ is holomorphic in $\Omega_{\pm},$ Green's formula then gives
\begin{equation}\label{f14}
-\langle \zeta_h'(\cdot),\psi\rangle=\lim_{\epsilon\searrow
0}\frac{i}{2\pi} \int_{\mathbb R}
\psi(\mu)\big[e_{+}(\mu+i\epsilon)-e_{-}(\mu-i\epsilon)\big]d\mu.
\end{equation}
Thus Lemma \ref{representationSSF} follows from \eqref{f13} and
\eqref{f14}.
\end{proof}

In the following we will use the result concerning the expression of the effective hamiltonian
$E_{j,-+}(z,h), j=0,1$ given in \cite{Di93_01} (see also \cite{Di05_01}).
In fact, under the assumptions \ref{h1}-\ref{h2},
the two leading terms of the symbol $E_{1,-+}(k,r;z,h)$ are computed in
\cite[section 4, formulas (4.5)-(4.7)]{Di93_01}, it was shown that:
\begin{equation}\label{cal}
E_{1,-+}(k,r;z,h)=z-\big(\lambda(k)+\varphi(r)+hK_1(k,r)+h^2K_2(k,r;z,h)\big),
\end{equation}
where $K_1\in S^{\delta+1}(\mathbb T^*\times {\mathbb R}^n)$ and
$K_2(\cdot;z,h)\in S^{\delta+2}({\mathbb T}^*\times {\mathbb R}^n),$
holomorphic with respect to $z$ in $\Omega$. Note that
\begin{equation}\label{cal1}
 E_{0,-+}(k,r;z)=z-\lambda(k),\quad k\in {\mathbb T}^*,\,z\in \Omega.
\end{equation}

From now on, we consider the $h$-pseudodifferential operator $H^w(k,-hD_k;h)$ with the following symbol:
\begin{equation}\label{opzind}
H(k,r;h)=\lambda(k)+\varphi(r)+hK_1(k,r).
\end{equation}
Remark that this operator is $z$-independent.

\begin{corollary}\label{repre2}
Under Assumptions \ref{h1} with \fbox{$\delta>n$} and \ref{h2},
there exists $\displaystyle G(k,r;z,h)\sim\sum_{j=0}^\infty g_j(k,r;z)h^j$
in $S^{\delta+2}({\mathbb T}^*\times {\mathbb R}^n)$ such that for $\mu\in I$ and $h$ small enough, we have:
\begin{equation}\label{repSSF1}
\zeta_h'(\mu) =
\lim_{\epsilon\searrow 0} \frac{1}{2\pi i}
\left[{\rm tr}\Big((z-B_\mu)^{-1}-
\big(z-\lambda(k)\big)^{-1}\Big)\right]^{z=\mu+i\epsilon}_{z=\mu-i\epsilon},
\end{equation}
where $B_\mu:= H^w(k,-hD_k;h)+h^2G^w(k,-hD_k,\mu;h)$. Here $H(k,r;h)$ is given by \eqref{opzind}.
\end{corollary}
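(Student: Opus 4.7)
The plan is to invoke Lemma~\ref{representationSSF} and substitute the explicit forms \eqref{cal}--\eqref{cal1} of the effective Hamiltonians. Setting $A(z,h):=H^w(k,-hD_k;h)+h^2K_2^w(k,-hD_k;z,h)$ gives $E_{1,-+}(z,h)=z-A(z,h)$ and $E_{0,-+}(z)=z-\lambda(k)$, so that
$$
e_+(z)={\rm tr}\bigl((z-A(z,h))^{-1}(1-h^2\partial_z K_2^w(z,h))-(z-\lambda(k))^{-1}\bigr),
$$
where the bracketed difference is of trace class by \eqref{f7}, Lemma~\ref{trace}, and $\delta>n$. The goal is to replace $E_{1,-+}^{-1}\partial_z E_{1,-+}$ with the resolvent $(z-B_\mu)^{-1}$ of a $z$-independent operator $B_\mu$, modulo a remainder whose jump across $\mathbb{R}$ at $z=\mu$ vanishes as $\epsilon\searrow 0$.

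I would linearize $A(z,h)$ in $z$ around $z=\mu$: since $K_2(\cdot\,;z,h)$ is holomorphic in $z\in\Omega$, Taylor's formula yields
$$
E_{1,-+}(z,h)=\bigl(1-h^2\partial_z K_2^w(\mu,h)\bigr)(z-B_\mu)+O\bigl((z-\mu)^2\bigr),
$$
for an appropriate $z$-independent operator $B_\mu=H^w(k,-hD_k;h)+h^2G^w(k,-hD_k,\mu;h)$. I would construct $G\sim\sum_j g_j(k,r;\mu)h^j$ in $S^{\delta+2}$ iteratively in $h$: the principal symbol $g_0$ combines the leading part $K_2^{(0)}(\cdot;\mu)$ of $K_2(\cdot;\mu,h)$ with corrections arising from $\partial_z K_2^{(0)}(\cdot;\mu)$, via a Neumann inversion of $1-h^2\partial_z K_2^w(\mu)$ inside the $h$-pseudodifferential calculus; the coefficients $g_j$ for $j\ge1$ are then determined so as to cancel the successive Taylor remainders order by order. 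The resolvent identity then produces the decomposition
$$
E_{1,-+}(z,h)^{-1}\partial_z E_{1,-+}(z,h)=(z-B_\mu)^{-1}+F(z,\mu,h),
$$
with $F(z,\mu,h)$ an explicit sum of resolvent products flanked by trace-class symbols.

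The main obstacle will be ensuring both the trace-class character of each term in $F$ and the vanishing of its jump across the real axis at $\mu$. The first point is settled because every resolvent product appearing in $F$ is sandwiched by at least one factor in ${\rm Op}_h^w(S^{\delta+2})$, hence trace class for $\delta>n$ (remark preceding Proposition~\ref{optr}), so that the cyclicity-of-trace manipulations of Proposition~\ref{optr} apply. The second point is achieved order by order: the $j$-th Taylor remainder in the construction of $G$ carries an explicit factor $(z-\mu)^{j+1}$ which neutralizes the singular contribution of the adjacent resolvents in the limit $\epsilon\searrow 0$, so that the jump of ${\rm tr}\,F(\cdot,\mu,h)$ across $\mathbb{R}$ at $\mu$ is $O(h^\infty)$. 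Feeding the decomposition back into Lemma~\ref{representationSSF}, using \eqref{f13}, and letting $\epsilon\searrow 0$ then yields precisely the formula \eqref{repSSF1}.
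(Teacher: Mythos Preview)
Your overall strategy---invoke Lemma~\ref{representationSSF} and substitute the explicit forms \eqref{cal}--\eqref{cal1}---is the right starting point, but you take a detour that the paper avoids, and your remainder analysis becomes muddled along the way.

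The paper's argument is more direct. From \eqref{cal} one has $\partial_z E_{1,-+}(z,h)=1-h^2\partial_z K_2^w(z,h)$, which for small $h$ is an invertible $h$-pseudodifferential operator whose inverse $I+h^2{\mathcal R}(z)$ again has symbol in $S^{0}$ (Neumann series and Beals' characterization). One then \emph{defines} $G$ through the exact operator identity
\[
\bigl(\partial_z E_{1,-+}(z,h)\bigr)^{-1}E_{1,-+}(z,h)=z-H^w-h^2G^w(z,h),
\]
valid for every $z\in\Omega$. This gives $E_{1,-+}(z,h)^{-1}\partial_z E_{1,-+}(z,h)=(z-B_z)^{-1}$ \emph{exactly}, with $B_z=H^w+h^2G^w(z,h)$ depending holomorphically on $z$. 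No Taylor remainder appears at all; combined with \eqref{cal1} and Lemma~\ref{representationSSF}, one is left only with the passage $B_{\mu\pm i\epsilon}\to B_\mu$, which is precisely what the holomorphy of $z\mapsto G(\cdot;z,h)$ provides.

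By contrast, you freeze the factorization at the fixed point $\mu$, which forces you to carry a remainder $R(z,\mu)=O\bigl(h^2(z-\mu)^2\bigr)$ and then argue that its contribution to the jump of the trace vanishes. This can be done, but your account conflates two different expansions: the construction of $G\sim\sum g_jh^j$ is an asymptotic series in $h$, whereas the ``Taylor remainders'' you speak of are in powers of $(z-\mu)$; there is no mechanism by which choosing the $h$-coefficients $g_j$ cancels $z$-dependent remainders. And your conclusion that the jump of ${\rm tr}\,F$ is $O(h^\infty)$ is neither what is needed nor correct: \eqref{repSSF1} is an exact identity, so the remainder must vanish as $\epsilon\searrow0$, not merely be small in $h$. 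A crude trace-norm bound on the leading piece of $F$ gives only $O(h^2)$, independent of $\epsilon$, so one would still have to exhibit cancellation in the jump---work the paper's route sidesteps entirely.

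In short: factor $E_{1,-+}(z)$ at the moving point $z$ rather than at the frozen point $\mu$. The inverse of $\partial_z E_{1,-+}(z)$ does the job in one line, and the holomorphy of $G$ in $z$ finishes the argument.
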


\begin{proof} Identity \eqref{cal} gives $\partial_z E_{1,-+}(k,r;z,h)=1+h^2\partial_z K_2(k,r;z,h)$
and since $\partial_z K_2\in S^{\delta+2}({\mathbb T}^*\times
{\mathbb R}^n)\in S^0({\mathbb T}^*\times {\mathbb R}^n)$ it follows
from the Calderon-Vaillancourt's theorem and the Beal's
characterization (see \cite{DiSj99_01}) that the corresponding
operator $\partial_z E_{1,-+}(z,h)$ is invertible for $h$ small
enough and his inverse is given by
$$
\big[\partial_z E_{1,-+}(z)\big]^{-1}= I+h^2{\mathcal R}(z),
$$
where ${\mathcal R}(z)$ is an $h$-pseudodifferential operator with
symbol satisfying the same properties as $K_2$. Combining this with
\eqref{cal} and using the composition formula of
$h$-pseudodifferential operators we see that there exists
$G\sim\sum_{j=0}^\infty g_j(k,r;z)h^j$ in $S^{\delta+2}({\mathbb
T}^*\times {\mathbb R}^n)$ such that
$$
\big(\partial_z E_{1,-+}(z,h)\big)^{-1} E_{1,-+}(z,h)=z-H^w(k,-hD_k;h)+h^2 G^w(k,-hD_k;z,h),
$$
which together with \eqref{cal1}, Lemma \ref{representationSSF} and the holomorphy of $G$ on $z$ give the corollary.

\end{proof}

\Subsection{Proof of the weak asymptotic expansion of $\xi'_h(\cdot)$}\label{pw-SSFh}

Let $I=]a,b[\in\mathbb R$ and $f\in {\mathcal C}_0^\infty(I).$
The proof of Theorem \ref{w-SSFh} is a simple consequence
of Proposition \ref{HeSjFormula}
(with $\psi=f$)  and symbolic calculus.
Here, we only give an outline of the proof.
For the details, we refer to \cite{Di93_01}.
Fix  $\epsilon$ in $\rbrack 0,\frac{1}{2}\lbrack$.
The integral \eqref{f11} over $\{z\in{\mathbb C};\ \vert \Im z\vert\le h^\epsilon\}$
is ${\mathcal O}(h^\infty)$,
since
$\overline\partial \widetilde f(z)= {\mathcal O}(\vert \Im z\vert^\infty)$ and
$$
\Big\Vert \Big[E_{j,-+}(z)^{-1} \partial_z E_{j,-+}(z)\Big]_{j=0}^1
\Big\Vert_{{\rm tr}}={\mathcal O}\big(\vert \Im z\vert^{-1}\big).
$$

On the other hand,
$\Big[E_{j,-+}(z,h)^{-1} \partial_z E_{j,-+}(z,h)\Big]_{j=0}^1$
has an asymptotic expansion in powers of $h$ uniformly for
$z$ in $\{z\in {\rm supp}\widetilde f;\,$
$|\Im(z)|\ge h^\epsilon\}$
(see \cite{Di93_01}). Therefore, as in \cite[Theorem 13.28]{DiSj99_01}, we have
$$
-\frac{1}{\pi}\int_{\mathbb C}
\overline\partial \widetilde f(z) {\rm tr}
\Big(\big[E_{j,-+}(z)^{-1}
\partial_z E_{j,-+}(z)\big]_{j=0}^1\Big)\,L(dz)
\sim h^{-n}\sum_{j\geq 0}
a_j h^{j},\,\, (h\searrow 0)
$$
with
$$
a_0=(2\pi)^{-n}\sum_{p\geq 1}\int_{{\mathbb R}^n_x}\Big(\int_ {E^*}
\big[f\big(\lambda_p(k)+\varphi(x)\big)-
f\big(\lambda_p(k)\big)\big]\,dk\Big)\,dx.
$$
Note that, the sum in the last equality is finite, since
$\displaystyle\lim_{p\rightarrow +\infty}\lambda_p(k) =+\infty$ and
$\varphi$ is bounded.

The coefficient $a_j$ is a finite sum of term of the form $\int\int
c_l(x,k) f^{(l)}\big(b(x,k)\big)\, dxdk,$ where $c_l$ depends on
$\varphi$ and their derivatives and $b(x,k)\in\big\{\lambda_p(k),
\lambda_p(k)+\varphi(x)\big\},$ which complete the first part of the
theorem \ref{w-SSFh}. See \cite[Chapter 8, Identity
(8.16)]{DiSj99_01}.

If $\mu$ is a non-critical energy of $P_0$ for all $\mu\in
I.$  Then $d\big(\lambda_p(k)\big)\not=0$ and
$d\big(\lambda_p(k)+\varphi(x)\big)\not=0$ for all $k\in
F(\mu).$ We recall that $F(\mu)$ is the Fermi surface.
Therefore, $a_j(f)=-\langle \gamma_j(\cdot),f\rangle,$ for all $f\in
{\mathcal C}_0^\infty(I)$ and $\gamma_j(\mu)$ are smooth
functions of $\mu\in I,$ in particular,
$\gamma_0(\mu)=\frac{\rm d}{\text{d}\mu}\left[\int_{\mathbb
R^n_x}\rho\big(\mu\big)-\rho\big(\mu-\varphi(x)\big) \,
dx\right],$ which complete the proof of the theorem \ref{w-SSFh}.
\hfill{$\square$}

\Subsection{Limiting absorption Theorem}\label{abslimTh}

In this subsection we establish a limiting absorption principle for the operator $P(h)$,
see Theorem \ref{abslimit1} below.
We start by the following lemma, let $[a,b]\in \mathbb R$ and $\Omega$ given by Proposition \ref{Grushinproblem}.
\begin{lemma}\label{abslimit}
Assume that \ref{h1}, \ref{h2} and \ref{h3} are satisfied on $[a,b]$. Then, for
$l\in \mathbb{N}^*,$ there exists $h_0(l)>0$ small enough such that for all
$0<h<h_0(l)$:
\begin{equation}\label{estabslim}
\big\Vert <hD_k>^{-\alpha} E_{j,-+}(k,-hD_k,\mu\pm i0;h)^{-l}
<hD_k>^{-\alpha} \big\Vert={\mathcal O}(h^{-l}), \quad (j=0,1),
\end{equation}
for all $\alpha>l-\frac{1}{2}$ and uniformly for $\mu\in [a,b],$
where $\Vert\cdot\Vert$ denotes the operator norm when considered as
an operator from $L^2({\mathbb T}_k^*)$ into itself.
\end{lemma}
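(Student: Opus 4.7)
I would prove the estimate separately in the cases $j=0$ and $j=1$, since the structure differs substantially.

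For $j=0$, the symbol $E_{0,-+}(k,r;z) = z - \lambda(k)$ depends only on $k$ by \eqref{cal1}, so Weyl quantization gives a multiplication operator, and $E_{0,-+}(\mu \pm i0)^{-l}$ is multiplication by $(\mu \pm i0 - \lambda(k))^{-l}$. I would use a finite partition of unity on $\mathbb{T}^*$ to separate a neighborhood of $F([a,b])$ from its complement. Away from the Fermi surface the multiplier is smooth and the bound is immediate. Near a point of $F([a,b])$, Assumption \ref{h2} provides local coordinates $(k_1, k')$ with $k_1 = \lambda(k) - \mu$; the singular factor becomes $(\mp i0 - k_1)^{-l}$, and the target estimate reduces to the one-dimensional semiclassical bound $\|\langle hD_{k_1}\rangle^{-\alpha}(\mp i0 - k_1)^{-l}\langle hD_{k_1}\rangle^{-\alpha}\|_{L^2 \to L^2} = O(h^{-l})$ for $\alpha > l - 1/2$, which follows from a direct Fourier computation (a standard semiclassical trace/Sobolev estimate).

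For $j=1$, I would use \eqref{cal} to write $E_{1,-+}(z) = z - B(z)$, where
\[
B(z) := \lambda(k) + \varphi(r) + h K_1^w(k,-hD_k) + h^2 K_2^w(k,-hD_k;z,h)
\]
is a self-adjoint $h$-pseudodifferential operator on $L^2(\mathbb{T}^*)$ with principal symbol $p(k,r) = \lambda(k) + \varphi(r)$, depending holomorphically on $z \in \Omega$. The required bound then becomes a semiclassical limiting absorption estimate for powers of the resolvent of $B(z)$. I would follow the Robert--Tamura/Mourre strategy: first verify that Assumption \ref{h3} yields non-trapping of $p$ on $p^{-1}([a,b])$ by producing an escape function $a \in S^0(\mathbb{T}^* \times \mathbb{R}^n)$ with $\{p, a\} \geq c > 0$ in a neighborhood of $p^{-1}([a,b])$; then with $A := a^w(k,-hD_k)$, establish a semiclassical Mourre estimate of the form $\chi(B(z))\, i[B(z), A]\, \chi(B(z)) \geq c\, h\, \chi(B(z))^2 - R_h(z)$ for a suitable $\chi \in C_0^\infty(]a - \eta, b + \eta[)$, uniformly for $z \in \Omega$; finally, the semiclassical version of Jensen--Mourre--Perry theory (as used in \cite{RoTa88_01}) delivers $\|\langle A\rangle^{-\alpha}(B(z) - \mu \mp i\epsilon)^{-l}\langle A\rangle^{-\alpha}\| = O(h^{-l})$ for $\alpha > l - 1/2$. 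Replacing $\langle A\rangle^{-\alpha}$ by $\langle hD_k\rangle^{-\alpha}$ (which is the Weyl quantization of $\langle r\rangle^{-\alpha}$) via standard pseudodifferential calculus on elliptic rearrangements then yields \eqref{estabslim}.

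\textbf{Main obstacle.} The heart of the argument is the semiclassical Mourre estimate for $B(z)$. The operator lives on $L^2(\mathbb{T}^*)$ and its ``kinetic'' term $\lambda(k)$ is a periodic band function rather than $|k|^2$, so the usual dilation generator is unavailable and the escape function must be manufactured from $\lambda$ and $\varphi$ themselves. The specific shape of Assumption \ref{h3} --- a positivity condition coupling $|\nabla \lambda(k)|^2$ with the term $r \cdot \nabla\varphi(r)\, \Delta\lambda(k)$ --- is precisely what produces such an escape function; making this geometric content rigorous, while simultaneously handling the $z$-dependent (but uniformly $S^{\delta+2}\subset S^0$) perturbation $h^2 K_2^w(z)$ uniformly in $z$ on the compact set $\overline{\Omega}$, is the most delicate step. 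The case $j=0$ is considerably easier and amounts to a one-variable distributional estimate in a chart adapted to the Fermi surface.
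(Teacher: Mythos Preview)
Your strategy for $j=1$ is the same as the paper's --- a semiclassical Mourre estimate for the self-adjoint $h$-pseudodifferential operator $\widetilde P_s=\lambda(k)+\varphi(r)+hK_1^w+h^2K_2^w(\cdot;s,h)$ with $s\in\Omega\cap\mathbb R$ viewed as a parameter, followed by an abstract LAP result and the specialization $s=\mu$. The difference is that the paper does not search for an escape function: it takes the explicit conjugate operator $A=\mathrm{Op}_h^w\big(-\nabla\lambda(k)\cdot r\big)$, computes the principal symbol of $[\,\widetilde P_s,A\,]$ directly, and observes that it equals precisely $|\nabla\lambda(k)|^2-r\!\cdot\!\nabla\varphi(r)\,\Delta\lambda(k)$, i.e.\ the quantity in \ref{h3}. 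So your ``main obstacle'' dissolves: the usual dilation generator \emph{is} available once one replaces $k$ by $\nabla\lambda(k)$ (for $\lambda(k)=|k|^2$ this is the standard $2k\cdot r$), and \ref{h3} is nothing but the positive-commutator condition for this $A$. G\aa rding then gives the Mourre estimate uniformly in $s$, and the paper invokes G\'erard's abstract limiting absorption theorem \cite{Ge08_01}, which delivers the bound directly with the weights $\langle hD_k\rangle^{-\alpha}$; this spares you the extra weight-replacement step $\langle A\rangle^{-\alpha}\rightsquigarrow\langle hD_k\rangle^{-\alpha}$ that your Jensen--Mourre--Perry route would require.

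For $j=0$ your hands-on Fourier argument is correct, but note that it is also covered by the same Mourre proof with $\varphi=0$: the commutator condition then reduces to $|\nabla\lambda(k)|^2>0$, which is exactly \ref{h2}. The paper does not spell this out separately.
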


\begin{proof} Recall that $\Omega$ is a small complex neighborhood of $[a,b]$ 
such that the proposition \ref{Grushinproblem} holds.
For $s\in\Omega \cap\mathbb R$, we consider the
$h$-pseudodifferential operator $\widetilde
P_s(k,-hD_k;h):=H^w(k,-hD_k;h)+h^2K_2^w((k,-hD_k;s,h)$ with the
following symbol:
$$
\widetilde P_s(k,r;h)=H(k,r;h)+h^2K_2(k,r;s,h),
$$
where $H(k,r;h)$ given by \eqref{opzind} and $K_2(\cdot;z,h)\in
S^{\delta+2}({\mathbb T}^*\times {\mathbb R}^n),$ holomorphic with
respect to $z$ in $\Omega$. Put $A={\rm Op}_h^w\big(-\nabla
\lambda(k)\cdot r\big).$ Since $K_2\in S^{\delta+2}({\mathbb
T}^*\times {\mathbb R}^n)\subset S^{0}({\mathbb T}^*\times {\mathbb
R}^n)$, it follows from  the $h$-pseudodifferential calculus and the
Calderon-Vaillancourt's theorem that
$$
\big[\widetilde P_s,A\big]={\rm
Op}_h^w\Big(\big\vert\nabla_k\lambda(k)\big\vert^2-
\big(-r\nabla_r\varphi(-r)\big)\cdot\Delta\lambda(k)\Big)+{\mathcal
O}(h),
$$
in ${\mathcal L}(L^2({\mathbb T}^*))$, uniformly for $s\in \Omega\cap \mathbb R$.
Here $[\cdot,\cdot]$ denotes the commutator.

The assumption \ref{h3} and the G\"arding  inequality (see [11,
chapter 8]) imply that for $f\in {\mathcal C}^\infty_0(\Omega\cap\mathbb R)$
there exists $C>0$ such that
$$
f(\widetilde P_s)\big[\widetilde P_s,A\big]f(\widetilde
P_s)\geq C f(\widetilde P_s)^2\quad \text{for}\ h\
\text{small enough and uniformly for} \ s\in \Omega\cap \mathbb R.
$$
Now applying \cite[Theorem 1]{Ge08_01}, we get, for all $l\in\mathbb N^*,$
\begin{equation}\label{wo1}
\big\Vert <hD_k>^{-\alpha} \big(\mu\pm i0-\widetilde
P_s\big)^{-l} <hD_k>^{-\alpha}\big\Vert={\mathcal O}(h^{-l}),
\quad \text{for all}\ \,\alpha>l-\frac{1}{2},
\end{equation}
uniformly for $\mu\in [a,b]$ and $s\in \Omega\cap\mathbb R.$ Take
$s=\mu$ in \eqref{wo1} we obtain the estimation \eqref{estabslim} for $j=1$.
\end{proof}

\begin{theorem}[Limiting absorption Theorem]\label{abslimit1}
With the same assumptions as Lemma \ref{abslimit}. One has, for
$l\in\mathbb{N}^*,$ there exists $h_0(l)>0$ small enough such that for all
$0<h<h_0(l)$:
\begin{equation}\label{estabslimP}
\big\Vert <hx>^{-\alpha} \big(P(h)-\mu\pm i0\big)^{-l}
<hx>^{-\alpha} \big\Vert_{{\mathcal L}\big(L^2({\mathbb
R}_x^n)\big)}={\mathcal O}(h^{-l}),
\end{equation}
for all $\alpha>l-\frac{1}{2}$ and
uniformly for $\mu\in [a,b].$
\end{theorem}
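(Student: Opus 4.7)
The plan is to transfer the resolvent estimate of Lemma~\ref{abslimit} from the effective Hamiltonian $E_{1,-+}(z,h)$ back to the physical operator $P(h)$ via the Grushin formalism of Subsection~\ref{EffHam}. First, by the unitary equivalence between $P(h)$ on $L^2(\mathbb R^n)$ and $\mathbb P_1(h)$ on $\mathbb L^0$ (the transform $u(x)\mapsto u(x)T_{\Gamma}$), which intertwines multiplication by $\langle hx\rangle^{-\alpha}$ with itself, it suffices to establish the estimate with $P(h)$ replaced by $\mathbb P_1(h)$.

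For the case $l=1$, I would start from the Schur complement identity \eqref{f1},
\begin{equation*}
(\mathbb P_1(h)-z)^{-1}=E_1(z,h)-E_{1,+}(z,h)\,E_{1,-+}(z,h)^{-1}\,E_{1,-}(z,h).
\end{equation*}
The operator $E_1(z,h)$ is holomorphic in $z\in\Omega$ (as used in the proof of Proposition~\ref{HeSjFormula}), so it extends continuously across the real axis with $h$-uniform norm and is harmless. The substantive step is to sandwich the second term by $\langle hx\rangle^{-\alpha}$ and absorb those weights through $E_{1,\pm}(z,h)$, leaving $\langle hD_k\rangle^{-\alpha}$-weights around $E_{1,-+}(z,h)^{-1}$, at which point Lemma~\ref{abslimit} applies. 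Concretely, I would establish the uniform weight-transfer bounds
\begin{equation*}
\bigl\|\langle hx\rangle^{-\alpha}E_{1,+}(z,h)\langle hD_k\rangle^{\alpha}\bigr\|+\bigl\|\langle hD_k\rangle^{\alpha}E_{1,-}(z,h)\langle hx\rangle^{-\alpha}\bigr\|\le C,
\end{equation*}
which is natural because in the Grushin reduction the slow physical coordinate $hx$ and the semi-classical torus momentum $-hD_k$ play canonically conjugate roles, and because $R_+$ (hence $E_{1,\pm}$) is essentially an $h$-pseudodifferential object built from Bloch data and independent of $\varphi$ (Remark~\ref{freeGrushin}).

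For $l\ge 2$ I would differentiate \eqref{f1} in $z$, using the identity $(\mathbb P_1-z)^{-l}=\frac{1}{(l-1)!}\partial_z^{l-1}(\mathbb P_1-z)^{-1}$ and the standard formula $\partial_z E_{1,-+}^{-1}=-E_{1,-+}^{-1}(\partial_z E_{1,-+})E_{1,-+}^{-1}$. This expresses $(\mathbb P_1-z)^{-l}$ as a sum of terms each carrying at most $l$ factors of $E_{1,-+}(z,h)^{-1}$ interleaved with bounded operators, since \eqref{cal} gives $\partial_z E_{1,-+}=I+h^2\partial_z K_2$ with $\partial_z K_2\in S^{\delta+2}\subset S^0$. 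Sandwiching, applying Lemma~\ref{abslimit} with threshold $\alpha>l-\tfrac12$, and absorbing the bounded intermediate factors by $h$-pseudodifferential calculus delivers the $O(h^{-l})$ bound, uniformly in $\mu\in[a,b]$.

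The main obstacle is the rigorous justification of the weight-transfer estimates in the previous paragraph: the paper's abstract description of $E_{1,\pm}$ does not directly record how they conjugate $\langle hx\rangle$ into $\langle hD_k\rangle$. One has to revisit the explicit construction of $R_+$ in \cite{DiSj99_01, HeSj90_01} and verify that the Bloch--Floquet structure of $R_+$ commutes with $\langle hx\rangle^{-\alpha}$ up to error operators that remain bounded between the appropriate spaces. Once this weight exchange is secured, the remainder of the argument is routine $h$-pseudodifferential calculus combined with the resolvent differentiation scheme sketched above.
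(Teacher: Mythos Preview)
Your overall strategy coincides with the paper's: reduce to $\mathbb P_1(h)$ by unitary equivalence, use the Schur complement identity \eqref{f1}, bound the holomorphic piece $E_1(z,h)$ trivially, and transfer the $\langle hx\rangle^{-\alpha}$ weights through $E_{1,\pm}$ so that Lemma~\ref{abslimit} applies to the middle factor. You have correctly isolated the only nontrivial step, namely the weight-transfer bounds for $\langle hx\rangle^{-\alpha}E_{1,+}(z,h)\langle hD_k\rangle^{\alpha}$ and its adjoint.

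Where you diverge from the paper is in how to justify that step. You propose to revisit the explicit Bloch--Floquet construction of $R_+$ from \cite{DiSj99_01,HeSj90_01}; this is workable in principle but heavier than necessary. The paper's argument is cleaner: conjugate the \emph{entire} Grushin operator $\mathcal P_1(z,h)$ by the diagonal matrix $\mathrm{diag}\bigl(\langle hy\rangle^{-\alpha},\langle hD_k\rangle^{-\alpha}\bigr)$ and observe that the resulting operator is again a well-posed Grushin problem (the conjugated $\mathbb P_1(h)-z$ and $R_+$ remain bounded between the appropriate spaces, essentially because $[\langle hy\rangle^{-\alpha},R_+]$ and $[\langle hy\rangle^{-\alpha},\mathbb P_1(h)]$ are lower order). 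Its bounded inverse is then automatically the conjugate of $\mathcal E_1(z,h)$, which exhibits $\langle hy\rangle^{-\alpha}E_{1,+}(z,h)\langle hD_k\rangle^{\alpha}$ and $\langle hD_k\rangle^{-\alpha}E_{1,-}(z,h)\langle hy\rangle^{\alpha}$ directly as bounded operators, with no need to unpack the internal structure of $R_+$. This conjugation trick is the one idea your proposal is missing.

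For $l\ge 2$, your differentiation scheme is a perfectly valid route; the paper simply asserts that ``the same arguments'' apply, meaning one reruns the conjugated Grushin argument with $\alpha>l-\tfrac12$ and iterates the resolvent identity. Either approach works once the $l=1$ weight transfer is secured.
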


\begin{proof}
Recall that the operator $P(h)$ acting on
$L^2(\mathbb R^n)$ with domain $H^2(\mathbb R^n)$ is unitary
equivalent to
$$
{\mathbb P}_1(h):=\big(D_y+hD_x\big)^2+V(y)+\varphi(x),
$$
acting on ${\mathbb L}^0$ with domain ${\mathbb L}^2$,
where $\displaystyle{\mathbb
L}^m:=\{u(x)T_{\Gamma}(x,y);\
\partial^{\alpha}_xu\in L^2(\mathbb R^n),\
\forall\alpha,\,|\alpha|\leq m\}$ for $m\in \mathbb N$
and $T_{\Gamma}$ is a distribution in ${\mathcal S}'(\mathbb R^{2n})$ defined by
$$
T_{\Gamma}(x,y)=\frac{1}{\text{vol}(E)h^{n}}
\sum_{\beta^*\in \Gamma^*}e^{i(x-hy)\frac{\beta^*}{h}}.
$$
Here $E$ is a fundamental domain of $\Gamma.$ Then we will prove \eqref{estabslimP}
for ${\mathbb P}_1(h).$

It follows from identity \eqref{f1} that:

\begin{align}
& \langle hy\rangle^{-\alpha}\big({\mathbb P}_1(h)-z\big)^{-1}\langle hy\rangle^{-\alpha}=
\langle hy\rangle^{-\alpha}E_1(z,h)\langle hy\rangle^{-\alpha}-\langle hy\rangle^{-\alpha}E_{1,+}(z,h)
\langle hD_k\rangle^{\alpha}\cdot \nonumber \\
& \hskip140pt\cdot\langle hD_k\rangle^{-\alpha}E_{1,-+}(z,h)^{-1}\langle hD_k\rangle^{-\alpha}
\cdot \langle hD_k\rangle^{\alpha} E_{1,-}(z,h)\langle hy\rangle^{-\alpha}. \nonumber
\end{align}
Since $E_1(z,h)$ is holomorphic then the first term of the r.h.s is bounded.
On the other hand, as in Proposition
\ref{Grushinproblem}, we prove that

\begin{align}
& \begin{pmatrix}
\langle hy\rangle^{-\alpha}\big({\mathbb P}_1(h) -z\big)\langle hy\rangle^{\alpha}
  & \langle hy\rangle^{-\alpha}R_+^*\langle hD_k\rangle^{\alpha} \\
\langle hD_k\rangle^{-\alpha} R_+\langle hy\rangle^{\alpha} & 0
\end{pmatrix}=\hskip100pt\nonumber \\
& \hskip150pt \begin{pmatrix}
\langle hy\rangle^{-\alpha}  & 0 \\
 0 & \langle hD_k\rangle^{-\alpha}
\end{pmatrix}
\begin{pmatrix}
{\mathbb P}_1(h) -z  & R_+^* \\
 R_+ & 0
\end{pmatrix}
\begin{pmatrix}
\langle hy\rangle^{\alpha}  & 0 \\
 0 & \langle hD_k\rangle^{\alpha}
\end{pmatrix}\nonumber
\end{align}
\noindent
is well-defined as a bounded operator from ${\mathbb L}^{2}\times L^2({\mathbb T}^*;\mathbb C^N)$
to ${\mathbb L}^{0}\times L^2({\mathbb T}^*;\mathbb C^N)$
and is bijective with bounded two-sided inverse given by:

\begin{align}
&\begin{pmatrix}
 \langle hy\rangle^{-\alpha}E_1(z,h)\langle hy\rangle^{\alpha}
  & \langle hy\rangle^{-\alpha}E_{1,+}(z,h)\langle hD_k\rangle^{\alpha} \\
\langle hD_k\rangle^{-\alpha}E_{1,-}(z,h) \langle hy\rangle^{\alpha} &
\langle hD_k\rangle^{-\alpha}E_{1,-+}(z,h) \langle hD_k\rangle^{\alpha}
\end{pmatrix} =\hskip70pt\nonumber\\
& \hskip100pt  \begin{pmatrix}
\langle hy\rangle^{-\alpha}  & 0 \\
 0 & \langle hD_k\rangle^{-\alpha}
\end{pmatrix}
\begin{pmatrix}
E_1(z,h)  &  E_{1,+}(z,h)\\
E_{1,-}(z,h) & E_{1,-+}(z,h)
\end{pmatrix}
\begin{pmatrix}
\langle hy\rangle^{\alpha}  & 0 \\
 0 & \langle hD_k\rangle^{\alpha}
\end{pmatrix}.\nonumber
\end{align}
Then $\langle hy\rangle^{-\alpha}E_{1,+}(z,h)\langle hD_k\rangle^{\alpha}$
and $\langle hD_k\rangle^{-\alpha}E_{1,-}(z,h) \langle hy\rangle^{\alpha}$
are well-defined and bounded. Therefore combining this with Lemma \ref{abslimit}
we obtain Theorem \ref{abslimit1} for $l=1$.
With the same arguments we obtain the result for $l\geq 2$.
\end{proof}

\begin{remark}
A simple consequence of Theorem \ref{abslimit1} is that $P(h)$ has no
embedded eigenvalues in $[a,b]$.
\end{remark}

\Subsection{Proof of the pointwise asymptotic expansion of  $\xi'_h(\cdot)$}\label{p-dSSFh}
Recall that $\Omega$ is a small complex neighborhood of $[a,b]$
such that the proposition \ref{Grushinproblem} holds.
For $s\in\Omega \cap\mathbb R$, we consider the $h$-pseudodifferential operator
$B_s(k,-hD_k;h):=H^w(k,-hD_k;h)+h^2G^w(k,-hD_k;s,h)$
with the following symbol:
$$
b_s(k,r;h)=H(k,r;h)+h^2G(k,r;s,h),
$$
where $H(k,r;h)$ given by \eqref{opzind} and $G(\cdot;z,h)\in
S^{\delta+2}({\mathbb T}^*\times {\mathbb R}^n),$ holomorphic with
respect to $z$ in $\Omega$ (see Corollary \ref{repre2}).

Clearly, the principal symbol of $B_s$ (i.e.
$b_s^0=\lambda(k)+\varphi(r)$) is independent on $s$. Moreover, the
assumption \ref{h3} implies that $[a,b]$ is a non-trapping region
for the classical hamiltonian $b_s$. Now, by constructing a
long-time parametrix for the time-dependent equation
$$
\big(hD_t-B_s\big)U(t)=0,\quad  U(0)=I,
$$
we can apply the Robert-Tamura method \cite{RoTa84_01, RoTa87_01, RoTa88_01}
(see also \cite[Remark 6.1]{Ro94_01}) to prove that
$$
\Big[{\rm tr}
\Big((z-B_s)^{-1}-(z-\lambda(k))^{-1}\Big)\Big]^{z=\mu+i0}_{z=\mu-i0}\,,
$$
has a complete asymptotic expansion in powers of $h$ uniformly for
$\mu\in [a,b]$ and $s\in \Omega\cap\mathbb R$. Remembering
\eqref{repSSF1} and take $s=\mu\in [a,b]\subset \Omega$ we obtain
\eqref{DimZerasym1}.

\end{document}